\newtheorem{theorem}{Theorem}[section]
\newtheorem{proposition}[theorem]{Proposition}
\newtheorem{definition}[theorem]{Definition}
\newcommand{\aangle}{\angle\!\!\!)}
\begin{document}

\title{{The Six-Point Circle Theorem}
\thanks{2000 {\it Math Subject Classification.} Primary: 51M25, 51M16. 
Secondary 51M04, 51M15. 
\newline
{\it Key words}: inscribed and circumscribed triangles, pedal triangles, 
geometrical transformations}}

\author{\Large{Adrian Mitrea}}
\date{}

\maketitle

\begin{abstract}
Given $\Delta ABC$ and angles $\alpha,\beta,\gamma\in(0,\pi)$ with 
$\alpha+\beta+\gamma=\pi$, we study the properties of the triangle $DEF$ 
which satisfies: (i) $D\in BC$, $E\in AC$, $F\in AB$, 
(ii) $\aangle D=\alpha$, $\aangle E=\beta$, $\aangle F=\gamma$, 
(iii) $\Delta DEF$ has the minimal area in the class of triangles satisfying 
(i) and (ii). In particular, we show that minimizer $\Delta DEF$, exists, 
is unique and is a pedal triangle, corresponding to a certain pedal point $P$. 
Permuting the roles played by the angles $\alpha,\beta,\gamma$ in (ii), 
yields a total of six such area-minimizing triangles, which are pedal 
relative to six pedal points, say, $P_1,....,P_6$. The main result of the 
paper is the fact that there exists a circle which contains all six points. 
\end{abstract}

\section{Introduction}
Consider the following question in planar geometry:  
{\it given three, nonconcurrent lines, $L_1, L_2, L_3$, along with three 
angles $\alpha,\,\beta,\,\gamma\in(0,\pi)$ satisfying 
$\alpha+\beta+\gamma=\pi$, find three points $X_1,X_2,X_3$ such that}
\begin{eqnarray}\label{madden1}
X_1\in L_1,\,\,X_2\in L_2,\,\,X_3\in L_3,
\end{eqnarray}
\noindent {\it and}
\begin{eqnarray}\label{madden2}
\aangle\,X_2X_1X_3=\alpha,\quad
\aangle\,X_1X_2X_3=\beta,\quad
\aangle\,X_1X_3X_2=\gamma.
\end{eqnarray}
\noindent Somewhat informally, with $A,B,C$ playing the roles of the 
points of mutual intersection of the lines $L_1,L_2,L_3$,  
this asks whether given a fixed triangle $ABC$, 
one can inscribe a triangle $X_1X_2X_3$ of an arbitrarily prescribed shape.
For a triangle, ``shape''  simply signifies the measures of its angles
(in the current scenario $\alpha,\beta,\gamma$). 

Although not necessarily obvious, this question turns out to have an 
affirmative answer. In fact, the collection 
${\mathcal I}_{\alpha\beta\gamma}$ of all triangles $X_1X_2X_3$ satisfying 
(\ref{madden1}) and (\ref{madden2}) has infinite cardinality. Since all 
triangles in ${\mathcal I}_{\alpha\beta\gamma}$ have the {\it same shape}, 
what distinguishes them is their {\it size}. It is not too difficult to 
show that ${\mathcal I}_{\alpha\beta\gamma}$ contains triangles of arbitrarily 
large areas, hence, the most distinguished triangle in 
${\mathcal I}_{\alpha\beta\gamma}$ is the one of the {\it smallest} area, 
called, for the purposes of this introduction, $\Delta_{\alpha\beta\gamma}$. 
Part of our paper is devoted to studying this minimizer, 
which enjoys many interesting properties. Among other things we show 
that this triangle of minimal area is unique, and is also a 
{\it pedal triangle}. Recall that a pedal triangle is a triangle whose 
vertices are the projections of a point, called the pedal point of the pedal 
triangle, onto the sides of another triangle. Let $P_{\alpha\beta\gamma}$ 
be the pedal point of $\Delta_{\alpha\beta\gamma}$. 

Of course, similar considerations apply to the situation when the 
ordered triplet $(\alpha,\beta,\gamma)$ in (\ref{madden2}) is replaced 
by any of its permutations 
\begin{eqnarray}\label{perm-2}
(\alpha,\beta,\gamma),\,\,\,
(\alpha,\gamma,\beta),\,\,\,
(\beta,\alpha,\gamma),\,\,\,
(\beta,\gamma,\alpha),\,\,\, 
(\gamma,\alpha,\beta),\,\,\,
(\gamma,\beta,\alpha). 
\end{eqnarray}
\noindent As before, these permutations lead to considering 
the collections ${\mathcal I}_{\alpha\beta\gamma}$, 
${\mathcal I}_{\alpha\gamma\beta}$, ..., defined as above, which in turn 
have their own minimizers,
\begin{eqnarray}\label{delta1}
\Delta_{\alpha\beta\gamma},\,\,\,
\Delta_{\alpha\gamma\beta},\,\,\, 
\Delta_{\beta\alpha\gamma},\,\,\,
\Delta_{\beta\gamma\alpha},\,\,\,
\Delta_{\gamma\alpha\beta},\,\,\, 
\Delta_{\gamma\beta\alpha}. 
\end{eqnarray}
\noindent Once again, they are all pedal triangles with respect to six points,
say, 
\begin{eqnarray}\label{pedal-1.1}
P_{\alpha\beta\gamma},\,\,P_{\alpha\gamma\beta},\,\, 
P_{\beta\alpha\gamma},\,\,P_{\beta\gamma\alpha},\,\, 
P_{\gamma\alpha\beta},\,\,P_{\gamma\beta\alpha}.
\end{eqnarray}
Remarkably, even though these six points are obtained via constructions that 
start from starkly different sets of premises, they turn out to be concyclic. 
That is,

\begin{theorem}[THE SIX POINT CIRCLE THEOREM]\label{Th-Intr}
There exists a circle that contains all six points in (\ref{pedal-1.1}). 
\end{theorem}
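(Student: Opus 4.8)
The plan is to reduce everything to a single computation in the plane of triangle \emph{shapes}, where concyclicity becomes transparent. Place the circumcircle of $\Delta ABC$ at the unit circle and identify $A,B,C$ with complex numbers $a,b,c$ of modulus one. For a pedal point $p\in\mathbb{C}$ the three feet are $D=\tfrac12(b+c+p-bc\bar p)$, $E=\tfrac12(c+a+p-ca\bar p)$, $F=\tfrac12(a+b+p-ab\bar p)$, so the shape of the pedal triangle is encoded by
\[
\Phi(p):=\frac{D-F}{E-F}=\frac{c-a}{c-b}\cdot\frac{1-b\bar p}{1-a\bar p}.
\]
The crucial structural fact, visible from this formula, is that $\Phi$ is a M\"obius transformation in the conjugate variable $\bar p$; hence $p\mapsto\Phi(p)$ is a bijection between points of the plane and (oriented) shapes, and its inverse is again an anti-M\"obius map.

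Next I translate the defining property of the minimizers. From the right-angled cyclic quadrilaterals at each vertex one gets $EF=PA\sin A$, $FD=PB\sin B$, $DE=PC\sin C$, so the law of sines in $\Delta DEF$ forces $a\,PA:b\,PB:c\,PC=\sin\theta_D:\sin\theta_E:\sin\theta_F$, where $(\theta_D,\theta_E,\theta_F)$ is the prescribed assignment of $\{\alpha,\beta,\gamma\}$ to $D,E,F$. Equivalently, the pedal point of the minimizer is the unique solution $p$ of $\Phi(p)=\sigma$, where $\sigma$ is the shape invariant of a triangle with those angles in the (fixed) orientation of the inscribed triangles, namely $\sigma=\dfrac{\sin\theta_E}{\sin\theta_D}\,e^{\,i\theta_F}$. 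Since a genuinely inscribed triangle inherits the orientation of $\Delta ABC$, the same sign occurs for every permutation, so the six points (\ref{pedal-1.1}) correspond to six shape values $\sigma_1,\dots,\sigma_6$ given by this single-valued formula.

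Now comes the heart of the matter: the six $\sigma_i$ are concyclic. The three permutations fixing the angle at $F$ pair them two-by-two, each pair $\bigl(\tfrac{\sin\theta_E}{\sin\theta_D}e^{i\theta_F},\ \tfrac{\sin\theta_D}{\sin\theta_E}e^{i\theta_F}\bigr)$ lying on the ray of argument $\theta_F\in\{\alpha,\beta,\gamma\}$ with reciprocal moduli --- that is, each pair consists of two points inverse with respect to the unit circle. Consequently the six values lie on a common circle iff they lie on one orthogonal to the unit circle, and writing such a circle as $z\bar z+\bar Bz+B\bar z+1=0$ reduces the question to the consistency of three real linear equations in $\mathrm{Re}\,B,\mathrm{Im}\,B$, i.e.\ to
\[
\det\!\begin{pmatrix}\cos\alpha&\sin\alpha&\dfrac{\sin^2\beta+\sin^2\gamma}{\sin\beta\sin\gamma}\\[2ex]\cos\beta&\sin\beta&\dfrac{\sin^2\gamma+\sin^2\alpha}{\sin\gamma\sin\alpha}\\[2ex]\cos\gamma&\sin\gamma&\dfrac{\sin^2\alpha+\sin^2\beta}{\sin\alpha\sin\beta}\end{pmatrix}=0 .
\]
Clearing denominators and subtracting the multiple $(\sin^2\alpha+\sin^2\beta+\sin^2\gamma)$ of the second column collapses this to $\det[\cos\theta_i,\sin\theta_i,\sin^3\theta_i]$, and using $\sin^3\theta=\tfrac34\sin\theta-\tfrac14\sin3\theta$ to $\det[\cos\theta_i,\sin\theta_i,\sin3\theta_i]$. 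The latter vanishes whenever $\alpha+\beta+\gamma=\pi$: the row combination with coefficients $\sin(\beta-\gamma),\sin(\gamma-\alpha),\sin(\alpha-\beta)$ annihilates all three columns, as a short product-to-sum computation (repeatedly invoking $\alpha+\beta+\gamma=\pi$) confirms.

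Finally I pull the conclusion back. Having shown the $\sigma_i$ concyclic, and since $p=\Phi^{-1}(\sigma)$ depends on $\sigma$ through an anti-M\"obius transformation --- a map carrying generalized circles to generalized circles --- the six pedal points in (\ref{pedal-1.1}) lie on a common circle, the collinear case being excluded for a nondegenerate $\Delta ABC$ with distinct angles. I expect the main obstacle to be twofold: first, verifying carefully that the inscribed/minimality hypothesis fixes one orientation \emph{uniformly} across all six permutations, so that the formula for $\sigma$ is truly single-valued (this is what guarantees the clean two-points-per-ray, inverse-pair picture); and second, the determinant identity above, which is the only genuinely computational step. Everything else is formal, resting on the explicit M\"obius character of $\Phi$ and the preservation of circles thereunder.
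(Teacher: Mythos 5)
Your proposal is essentially correct, and it takes a genuinely different route from the paper's. The paper never leaves the plane of $\Delta ABC$: using pedal-angle relations it places the six points in pairs on three circles through $A,B$ and three circles through $A,C$, inverts with center $A$ so that these become two pencils of concurrent lines, finishes with a synthetic angle chase (parallel chords, inscribable quadrilaterals, an isosceles trapezoid), and inverts back. You instead linearize the problem in shape space: the explicit anti-M\"obius map $\Phi$ turns pedal points into shapes, the six minimizers become three pairs of points inverse in the unit circle, and concyclicity becomes the solvability of a linear system. I checked your computational core and it is sound: the formula $\Phi(p)=\frac{(c-a)(1-b\bar p)}{(c-b)(1-a\bar p)}$ is correct; so is the reduction to $\det[\cos\theta_i,\sin\theta_i,\sin^3\theta_i]$ and then to $\det[\cos\theta_i,\sin\theta_i,\sin 3\theta_i]$; and the row vector $\bigl(\sin(\beta-\gamma),\sin(\gamma-\alpha),\sin(\alpha-\beta)\bigr)$ does annihilate all three columns, the first two identically and the third precisely because $\alpha+\beta+\gamma=\pi$ (cases with two or three equal angles make rows equal and are trivial). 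In fact you can bypass the determinant: the system is solved explicitly by $\mathrm{Re}\,B=-\tfrac12$, $\mathrm{Im}\,B=\mp\frac{\sin^2\alpha+\sin^2\beta+\sin^2\gamma}{4\sin\alpha\sin\beta\sin\gamma}$ (sign according to your orientation convention), the verification being exactly the law of cosines $\sin^2\theta=\sin^2\theta'+\sin^2\theta''-2\sin\theta'\sin\theta''\cos\theta$ for angles summing to $\pi$. Your route buys uniqueness statements for free and an explicit circle; the paper's stays within elementary inversive geometry.

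There are, however, two justifications that need repair. The more serious one is the step you yourself flagged: the claim that ``a genuinely inscribed triangle inherits the orientation of $\Delta ABC$'' is false for the classes actually being minimized over. In ${\mathcal I}_{\alpha\beta\gamma}$ the vertices lie on the side \emph{lines}, not the segments, and the pedal triangle of any point \emph{outside} the circumcircle is inscribed in this sense yet has the opposite orientation (this is the sign hidden in Theorem~\ref{tata100}). So inscribedness alone cannot fix the sign of $\arg\sigma$; minimality must. The correct argument is an inversion comparison, which is exactly the paper's Theorem~\ref{tata3.1} combined with Theorem~\ref{New-Th1W}: if the pedal point of a minimizer lay outside the circumcircle, its inverse with respect to the circumcircle would produce a similar pedal triangle, lying in the same class ${\mathcal I}$, of strictly smaller area --- contradicting minimality. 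Hence all six points of (\ref{pedal-1.1}) lie inside the circumcircle, all six pedal triangles carry the orientation of $\Delta ABC$, and your formula for $\sigma$ is single-valued with a uniform sign, as your argument requires.

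The second, smaller point is your unsupported assertion that the pulled-back locus is a circle rather than a line; ``nondegeneracy of $ABC$'' is not a reason, since the line case occurs exactly when the shape circle passes through $\Phi(\infty)$, a condition on $ABC$ of codimension one, not an open one. But this too can be closed inside your framework: with the explicit $B$ above, any point $\rho e^{i\phi}$ on the shape circle must satisfy $2\,\mathrm{Re}(Be^{-i\phi})=-(\rho+\rho^{-1})\le-2$, and since $\mathrm{Re}\,B=-\tfrac12$ this forces $\sin\phi$ to have the same sign as for the six $\sigma_i$; thus the shape circle lies in the closed half-plane of same-orientation shapes, while $\Phi(\infty)$ lies strictly in the opposite open half-plane (because $\infty$ is exterior to the circumcircle and $\Phi$ sends the circumcircle to the real axis, the shapes of degenerate Simson triangles). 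So the line case never occurs. For what it is worth, the paper's own proof glosses over the same issue when it inverts the circle through the points $M'_i$ back through the inversion center $A$.
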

\noindent This is the main result of the paper (cf. Theorem~\ref{tata3.3} 
for a formal statement). The starting point in its proof is to consider, 
besides the area-minimizing triangle inscribed in $\Delta ABC$ and having 
prescribed angles, the {\it area-maximizing triangle circumscribed to} 
$\Delta ABC$, having the same angles. This dual aspect of the problem 
under consideration is very useful in understanding the nature of the 
extremal triangles of a prescribed shape. A concrete step in this regard
is proving a structure theorem, detailing how these three triangles 
are related to one another. For instance, we show that 
{\it the area of the original triangle is the geometric mean of the 
area-minimizer and the area-maximizer triangles}; cf. Theorem~\ref{tata444}. 
Since the conclusion in Theorem~\ref{Th-Intr} involves a circle, 
it is not unnatural that another important tool in its proof 
is the {\it geometrical inversion}. 

In $\S 2$, the properties of the area-minimizing and area-maximizing 
triangles having prescribed angles and being, respectively, inscribed 
and circumscribed in a given triangle are examined. Finally, $\S 3$ is 
devoted to presenting the proof of The Six Point Circle Theorem. 
The presentation in this paper is largely self-contained. For more basic 
concepts and definitions related to the geometry of the triangle the 
interested reader is referred to, e.g. \cite{Ki}, and the references therein.

\section{Extremal Triangles of a Prescribed Shape}

\noindent Throughout, we let $\Delta ABC$ denote the triangle with 
vertices $A,B,C$ and denote by $|\Delta ABC|$ the area of $\Delta\,ABC$. 

Given a triangle $A_1A_2A_3$ along with a triangle $B_1B_2B_3$ inscribed 
in it, we first recall a result from \cite{AM} that gives 
a procedure for obtaining a triangle, $C_1C_2C_3$, that is inscribed in 
$\Delta B_1B_2B_3$ and is homotopic to $\Delta A_1A_2A_3$. By definition, 
two triangles are homotopic if their corresponding sides are parallel. 
The concurrence point of the lines passing through the corresponding vertices 
of two homotopic triangles is the homotopy center. The justification of the 
following useful result is left to the interested reader (a proof 
is given in \cite{AM}).
\begin{proposition}\label{tata111}
Let $\Delta A_1A_2A_3$ be arbitrary and assume that $B_1\in A_2A_3$,
$B_3\in A_2A_1$, $B_2\in A_1A_3$ (see Figure~1 below). Take $C_1\in B_2B_3$, 
$C_2\in B_1B_3$, $C_3\in B_1B_2$ such that
\begin{eqnarray}\label{Def-Kk}
\frac{A_1B_3}{B_3A_2}=\frac{B_2C_3}{C_3B_1},\quad
\frac{A_2B_1}{B_1A_3}=\frac{B_3C_1}{C_1B_2},\quad
\frac{A_3B_2}{B_2A_1}=\frac{B_1C_2}{C_2B_3},
\end{eqnarray}
\noindent Then $\Delta A_1A_2A_3$ and $\Delta C_1C_2C_3$ are
homotopic and, in addition, $|\Delta B_1B_2B_3|$ is the geometric 
mean of $|\Delta A_1A_2A_3|$ and $|\Delta C_1C_2C_3|$, i.e.
\begin{eqnarray}\label{SS1}
|\Delta B_1B_2B_3|^2=|\Delta A_1A_2A_3|\cdot|\Delta C_1C_2C_3|. 
\end{eqnarray}
\noindent Conversely, if $\Delta A_1A_2A_3$ and $B_1\in A_2A_3$, 
$B_3\in A_2A_1$, $B_2\in A_1A_3$ are given and $C_1\in B_2B_3$, $C_2\in B_1B_3$, 
$C_3\in B_1B_2$ are such that $\Delta A_1A_2A_3$ and $\Delta C_1C_2C_3$ 
are homotopic, then (\ref{Def-Kk}) and (\ref{SS1}) hold. 
\end{proposition}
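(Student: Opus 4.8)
The plan is to work in affine (barycentric) coordinates with $A_1,A_2,A_3$ as the reference triangle, so that every assertion becomes an identity in the three ratio parameters
\[
\lambda_1=\frac{A_2B_1}{B_1A_3},\qquad
\lambda_2=\frac{A_3B_2}{B_2A_1},\qquad
\lambda_3=\frac{A_1B_3}{B_3A_2}.
\]
Writing position vectors one has $B_1=\dfrac{A_2+\lambda_1A_3}{1+\lambda_1}$, $B_2=\dfrac{A_3+\lambda_2A_1}{1+\lambda_2}$, and $B_3=\dfrac{A_1+\lambda_3A_2}{1+\lambda_3}$. By the hypotheses (\ref{Def-Kk}), the points $C_1\in B_2B_3$, $C_2\in B_1B_3$, $C_3\in B_1B_2$ are governed by the \emph{same} three parameters; for instance $C_1=\dfrac{B_3+\lambda_1B_2}{1+\lambda_1}$, with $C_2,C_3$ obtained by the cyclic substitution $1\to2\to3\to1$. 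Substituting the expressions for the $B_i$ then produces each $C_i$ as an explicit affine combination of $A_1,A_2,A_3$ over the common denominator $D=(1+\lambda_1)(1+\lambda_2)(1+\lambda_3)$.

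I would establish the homotopy claim by computing the side vectors of $\Delta C_1C_2C_3$ directly from these expressions. The decisive point is that the formulas collapse: in each difference $C_i-C_j$ the barycentric coordinate of the remaining vertex cancels and the other two collapse to a common factor $1+\lambda_1\lambda_2\lambda_3$, yielding
\[
C_1-C_2=k\,(A_1-A_2),\qquad
C_2-C_3=k\,(A_2-A_3),\qquad
C_3-C_1=k\,(A_3-A_1),\qquad
k:=\frac{1+\lambda_1\lambda_2\lambda_3}{D}.
\]
A single scalar $k$, common to all three sides, shows at once that corresponding sides of $\Delta A_1A_2A_3$ and $\Delta C_1C_2C_3$ are parallel — i.e. the two triangles are homotopic (in fact related by a homothety of ratio $k$) — and, since area scales as the square of a linear ratio, that $|\Delta C_1C_2C_3|=k^2\,|\Delta A_1A_2A_3|$.

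It remains to pin down $|\Delta B_1B_2B_3|$, which I would obtain by subtracting from $\Delta A_1A_2A_3$ the three corner triangles $A_1B_3B_2$, $A_2B_1B_3$, $A_3B_2B_1$, each evaluated by the elementary rule that a triangle sharing the angle at a vertex $A_i$ has area ratio equal to the product of the two adjacent side-fractions. This gives corner ratios $\dfrac{\lambda_3}{(1+\lambda_2)(1+\lambda_3)}$, $\dfrac{\lambda_1}{(1+\lambda_3)(1+\lambda_1)}$, $\dfrac{\lambda_2}{(1+\lambda_1)(1+\lambda_2)}$, and upon summing over the common denominator $D$ the surviving numerator simplifies to exactly $1+\lambda_1\lambda_2\lambda_3$. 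Hence $|\Delta B_1B_2B_3|=k\,|\Delta A_1A_2A_3|$ with the \emph{same} $k$ as above, and therefore $|\Delta B_1B_2B_3|^2=k^2\,|\Delta A_1A_2A_3|^2=|\Delta A_1A_2A_3|\cdot|\Delta C_1C_2C_3|$, which is (\ref{SS1}).

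For the converse I would argue by rigidity. Triangles homotopic to $\Delta A_1A_2A_3$ with the same orientation form a three-parameter family (a common dilation factor together with a translation), while requiring the respective vertices to lie on the three fixed lines $B_2B_3$, $B_1B_3$, $B_1B_2$ imposes three incidence conditions; hence such an inscribed homotopic triangle is generically unique. Since the forward direction already exhibits the triangle determined by (\ref{Def-Kk}) as one such triangle, it must be the only one, so any $\Delta C_1C_2C_3$ inscribed in $\Delta B_1B_2B_3$ and homotopic to $\Delta A_1A_2A_3$ necessarily satisfies (\ref{Def-Kk}), and then (\ref{SS1}) follows from what was proved above. I expect the genuine content — as opposed to bookkeeping — to be the coincidence that one and the same rational function $k=(1+\lambda_1\lambda_2\lambda_3)/D$ governs both the $A\!\to\!C$ homothety ratio and the $A\!\to\!B$ area ratio; checking these two algebraic simplifications is the main, if routine, labor.
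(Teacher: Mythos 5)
First, a caveat about the comparison itself: the paper never proves Proposition~\ref{tata111} --- it explicitly leaves the justification to the reader, citing the preprint \cite{AM} --- so your argument can only be judged on its own merits, not against an argument in the text.

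The direct implication in your proposal is correct, and the two simplifications it hinges on do check out. With $B_1=\frac{A_2+\lambda_1A_3}{1+\lambda_1}$, $C_1=\frac{B_3+\lambda_1B_2}{1+\lambda_1}$ and their cyclic analogues, one finds
\[
C_1-C_2=\frac{1+\lambda_1\lambda_2\lambda_3}{D}\,(A_1-A_2),
\qquad D=(1+\lambda_1)(1+\lambda_2)(1+\lambda_3),
\]
and cyclically, so $\Delta C_1C_2C_3$ is indeed a homothetic (hence homotopic) copy of $\Delta A_1A_2A_3$ with ratio $k=(1+\lambda_1\lambda_2\lambda_3)/D$; and the corner subtraction gives $|\Delta B_1B_2B_3|=k\,|\Delta A_1A_2A_3|$ because $D-\lambda_3(1+\lambda_1)-\lambda_1(1+\lambda_2)-\lambda_2(1+\lambda_3)=1+\lambda_1\lambda_2\lambda_3$. (Reassuringly, $k=0$ exactly in the Menelaus case $\lambda_1\lambda_2\lambda_3=-1$, i.e.\ when the $B_i$ are collinear.) This yields (\ref{SS1}).

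The converse is where you have a genuine gap. Your uniqueness claim rests on parameter counting: homotopic copies of $\Delta A_1A_2A_3$ form the three-parameter family $C_i=tA_i+v$, the incidences $C_i\in\ell_i$ (with $\ell_1=B_2B_3$, $\ell_2=B_1B_3$, $\ell_3=B_1B_2$) are three linear conditions, ``hence generically unique,'' and since the forward direction exhibits one solution ``it must be the only one.'' That last inference is a non sequitur: for an inhomogeneous $3\times 3$ linear system, producing one solution does not rule out the singular case in which there is an entire line of solutions. Were that degeneracy to occur for the particular lines at hand, there would be a one-parameter family of homotopic inscribed triangles of varying areas, and the converse --- which must hold for \emph{every} homotopic inscribed $\Delta C_1C_2C_3$ --- would fail for all but one of them. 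Nonsingularity of the system (equivalently, uniqueness) is precisely the content of the converse, and genericity does not supply it.

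The gap is fixable by a short argument you could append. Assume the $B_i$ are not collinear (otherwise no nondegenerate homotopic triangle can be inscribed and the converse is vacuous). Suppose $C_i=tA_i+v$ and $C'_i=t'A_i+v'$, with $t,t'\neq 0$, both satisfy $C_i,C'_i\in\ell_i$ and $C\neq C'$. Then $C'_i=sC_i+w$ with $(s,w)\neq(1,0)$. If $s=1$, then $w\neq 0$ is parallel to all three lines $\ell_i$, impossible since they are pairwise non-parallel. If $s\neq 1$, the map is a homothety with some center $Z$; for each $i$, either $C'_i\neq C_i$, in which case $Z$, $C_i$, $C'_i$ are collinear and hence $Z\in\ell_i$, or $C'_i=C_i$, which forces $C_i=Z$ and again $Z\in\ell_i$. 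Thus $Z\in\ell_1\cap\ell_2\cap\ell_3$, contradicting the fact that the three side lines of the nondegenerate triangle $B_1B_2B_3$ are not concurrent. Hence the homotopic inscribed triangle is unique, it must be the one produced in your forward direction, and (\ref{Def-Kk}), (\ref{SS1}) follow as you intended.
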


\begin{center}
\includegraphics[scale=0.85]{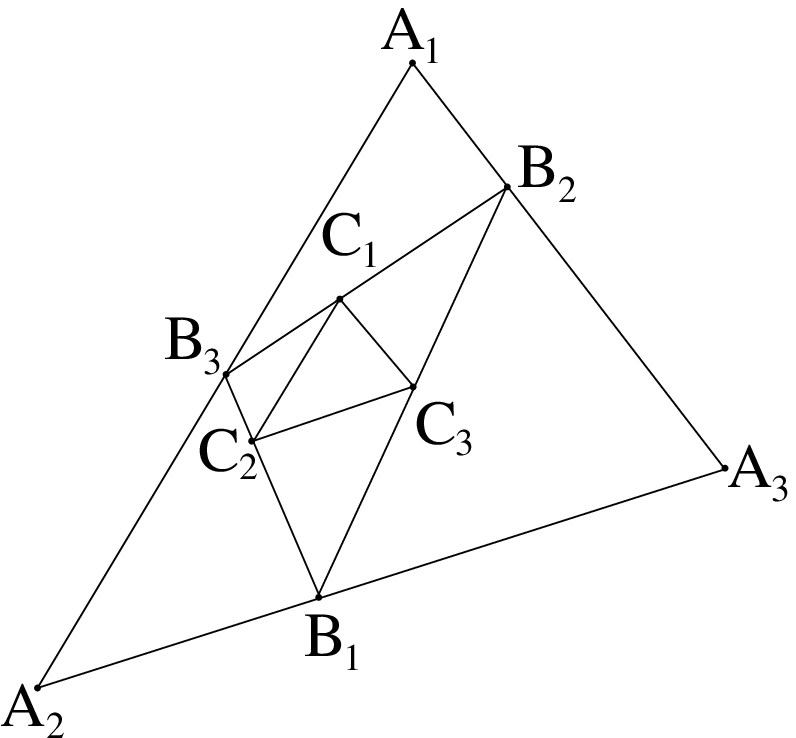}
\end{center}
\centerline{{\bf Figure~1}}
\vskip 0.15 in

\begin{definition}\label{Def-Fun}
(i) Call a triangle $XYZ$ {\tt inscribed} in a given triangle 
$ABC$ if each of the lines $AB$, $BC$ and $AC$ contain precisely 
one of the vertices $X$, $Y$, $Z$. Call $\Delta XYZ$ {\tt circumscribed} 
to $\Delta ABC$ if $\Delta ABC$ is inscribed in $\Delta XYZ$. 

(ii) Given two fixed triangles $ABC$, called of {\tt reference},
and $MNP$, called {\tt fundamental}, let ${\mathcal I}$ be the set
of all triangles which are similar to the fundamental one and are
inscribed in the triangle of reference, and let ${\mathcal C}$
be the set of all triangles similar to the fundamental one
and circumscribed to the reference one.

(iii) Given $\Delta\,ABC$, triangle of reference, and a fundamental 
triangle with angles $\alpha,\beta,\gamma$, denote by 
${\mathcal I}_{\alpha\beta\gamma}$ the set 
of all triangles $XYZ$ with $X,Y,Z$ belonging, respectively, to 
the lines $BC$, $AC$, $AB$, and for which  
$\aangle\,X$, $\aangle\,Y$, $\aangle\,Z$ coincide with 
$\alpha$, $\beta$, $\gamma$ (in this order). 

Likewise, let ${\mathcal C}_{\alpha\beta\gamma}$ denote the set 
of all triangles $XYZ$ for which $A,B,C$ belonging, respectively, to 
the lines $YZ$, $XZ$, $XY$, and for which  
$\aangle\,X$, $\aangle\,Y$, $\aangle\,Z$ coincide with 
$\alpha$, $\beta$, $\gamma$ (in this order). 
\end{definition}

Clearly, 

\begin{eqnarray}\label{IIC}
\begin{array}{l}
{\mathcal{I}}={\mathcal{I}}_{\alpha\beta\gamma}\bigcup
\,{\mathcal{I}}_{\alpha\gamma\beta}\bigcup
\,{\mathcal{I}}_{\beta\alpha\gamma}\bigcup
\,{\mathcal{I}}_{\beta\gamma\alpha}\bigcup
\,{\mathcal{I}}_{\gamma\alpha\beta}\bigcup
\,{\mathcal{I}}_{\gamma\beta\alpha},
\\[8pt]
{\mathcal{C}}={\mathcal{C}}_{\alpha\beta\gamma}\bigcup
\,{\mathcal{C}}_{\alpha\gamma\beta}\bigcup
\,{\mathcal{C}}_{\beta\alpha\gamma}\bigcup
\,{\mathcal{C}}_{\beta\gamma\alpha}\bigcup
\,{\mathcal{C}}_{\gamma\alpha\beta}\bigcup
\,{\mathcal{C}}_{\gamma\beta\alpha}.
\end{array}
\end{eqnarray}

Our next result shows the existence of area-minimizers and
area-maximizers in the above classes. The issue of uniqueness 
for these extremal triangles will be dealt with in Theorem~\ref{tata444} 
below. 

\begin{theorem}\label{tata333} 
There exists a triangle 
$\Delta\,M_1N_1P_1\in {\mathcal I}_{\alpha\beta\gamma}$ which has the 
smallest area amongst all the triangles in the class
${\mathcal I}_{\alpha\beta\gamma}$. Also, there exists a triangle 
$\Delta\,M_2N_2P_2\in {\mathcal C}_{\alpha\beta\gamma}$ which has the 
largest area amongst all the triangles in the class
${\mathcal C}_{\alpha\beta\gamma}$. In addition, any two such 
triangles satisfy 
\begin{eqnarray}\label{As-Qw}
|\Delta M_1N_1P_1|\cdot |\Delta M_2N_2P_2|=|\Delta ABC|^2.
\end{eqnarray}
 
In fact, similar results are valid for each of the classes appearing in 
(\ref{IIC}), including ${\mathcal{I}}$ and ${\mathcal{C}}$. 
\end{theorem}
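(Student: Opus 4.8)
The plan is to treat the inscribed problem first, obtain the minimizer by a compactness argument, and then transfer everything to the circumscribed side through the geometric--mean mechanism of Proposition~\ref{tata111}, which will simultaneously produce the maximizer and the product relation (\ref{As-Qw}). First I would record that $\mathcal I_{\alpha\beta\gamma}$ is nonempty and is a one--parameter family of mutually similar triangles: a triangle of prescribed angles can always be inscribed in $\Delta ABC$, and fixing, say, the position of the vertex $X\in BC$ determines the remaining vertices $Y\in AC$, $Z\in AB$ (up to the discrete data already encoded in the labeling), so the family is parametrized by a real variable $t$ recording the position of $X$ along the \emph{line} $BC$. Since all members are similar, $|\Delta XYZ|$ equals a fixed shape constant times the square of the common scale, and I would write the area as a continuous function $S(t)$. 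The two points to verify are that $S$ is continuous and \emph{coercive}, i.e. $S(t)\to\infty$ as $|t|\to\infty$: as $X$ recedes to infinity along $BC$ while $Y,Z$ remain on the fixed lines $AC,AB$, the fixed shape forces the scale, hence the area, to grow without bound. Continuity together with coercivity then yields a minimizer $\Delta M_1N_1P_1\in\mathcal I_{\alpha\beta\gamma}$.

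Next I would build the bridge to the circumscribed class. Given any $W=\Delta M_2N_2P_2\in\mathcal C_{\alpha\beta\gamma}$, the reference triangle $\Delta ABC$ is inscribed in it, so I can apply Proposition~\ref{tata111} with outer triangle $A_1A_2A_3:=M_2N_2P_2$, middle triangle $B_1B_2B_3:=ABC$, and let $C_1C_2C_3=:w$ be the inner triangle it produces, inscribed in $\Delta ABC$ and homotopic to $W$. By (\ref{SS1}) this gives
\begin{eqnarray*}
|\Delta ABC|^2=|W|\cdot|w|.
\end{eqnarray*}
The one step requiring care is that $w$ lands in the \emph{same} class $\mathcal I_{\alpha\beta\gamma}$: matching $A,B,C$ to $B_1,B_2,B_3$ according to which side of $W$ each lies on, and using that homotopy carries the angle at $M_2$ to the angle at the corresponding vertex of $w$, one checks that the vertex of $w$ on $BC$ carries the angle $\alpha$, the one on $AC$ carries $\beta$, and the one on $AB$ carries $\gamma$ --- exactly the labeling defining $\mathcal I_{\alpha\beta\gamma}$. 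Thus $W\mapsto w$ is a well--defined map $\mathcal C_{\alpha\beta\gamma}\to\mathcal I_{\alpha\beta\gamma}$ with $|W|\,|w|=|\Delta ABC|^2$.

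I would then show this map is a bijection, which is the crux of the argument. The ratio conditions (\ref{Def-Kk}) pin down the positions of $A,B,C$ on the sides of $W$ from the positions of the vertices of $w$ on the sides of $\Delta ABC$; conversely, given $w\in\mathcal I_{\alpha\beta\gamma}$ the converse half of Proposition~\ref{tata111} reconstructs a unique homotopic outer triangle circumscribed to $\Delta ABC$ realizing those ratios, namely the preimage $W$. This yields injectivity and surjectivity. With the bijection in hand the conclusion is immediate: since $|W|=|\Delta ABC|^2/|w|$ along corresponding pairs, maximizing $|W|$ over $\mathcal C_{\alpha\beta\gamma}$ is the same as minimizing $|w|$ over $\mathcal I_{\alpha\beta\gamma}$. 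Hence the circumscribed maximizer $\Delta M_2N_2P_2$ exists and is the partner of $\Delta M_1N_1P_1$, and \emph{every} min--max pair satisfies $|\Delta M_1N_1P_1|\cdot|\Delta M_2N_2P_2|=|\Delta ABC|^2$, which is (\ref{As-Qw}).

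The identical reasoning applies verbatim to each of the six classes in (\ref{IIC}); and for the unions $\mathcal I,\mathcal C$ one takes the extremum over the six subclasses, the correspondence then matching the global inscribed minimum with the global circumscribed maximum, so the product relation persists. I expect the main obstacle to be establishing the bijectivity of the geometric--mean correspondence together with the bookkeeping of the angle labeling --- ensuring that the homotopic partner of a class-$\alpha\beta\gamma$ triangle remains in class $\alpha\beta\gamma$ and that the correspondence is onto --- whereas the existence half, resting on continuity and coercivity, is comparatively routine.
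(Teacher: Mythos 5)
Your transfer mechanism is sound and is essentially the paper's own: the correspondence $W\mapsto w$ built from Proposition~\ref{tata111}, the check that $w$ lands in ${\mathcal{I}}_{\alpha\beta\gamma}$, the inverse construction by drawing parallels through $A$, $B$, $C$ to the sides of an inscribed triangle, and the resulting equivalence of maximizing over ${\mathcal{C}}_{\alpha\beta\gamma}$ with minimizing over ${\mathcal{I}}_{\alpha\beta\gamma}$ are exactly the ingredients of the paper's proof (both directions of the correspondence are used there, and summarized in Remark~1); your derivation of (\ref{As-Qw}) for an arbitrary min--max pair, and the reduction of the classes ${\mathcal{I}}$, ${\mathcal{C}}$ to the six subclasses, are also correct.

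The genuine gap is in the existence step, specifically the opening claim that fixing the vertex $X\in BC$ determines $Y$ and $Z$ up to discrete data, so that ${\mathcal{I}}_{\alpha\beta\gamma}$ is parametrized by the position $t$ of $X$ on the line $BC$. This fails for some admissible $(\alpha,\beta,\gamma)$. Indeed, for a fixed orientation, $Z$ is recovered from $X$ as a point of $\sigma_X(AC)\cap AB$, where $\sigma_X$ is the spiral similarity of center $X$, angle $\pm\alpha$ and ratio $\sin\beta/\sin\gamma$. The direction of the line $\sigma_X(AC)$ does not depend on $X$ (a spiral similarity turns all directions by the same angle), and when that direction is the direction of $AB$ --- which happens for one of the two orientation families precisely when $\alpha=\aangle\,A$ or $\alpha=\pi-\aangle\,A$ --- the intersection $\sigma_X(AC)\cap AB$ is empty for every $X$ with at most one exception $X_0$, at which $\sigma_{X_0}(AC)$ coincides with $AB$ and \emph{every} $Y\in AC$ yields a valid triangle. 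A concrete instance: for $\Delta ABC$ equilateral and $(\alpha,\beta,\gamma)=(60^{\circ},80^{\circ},40^{\circ})$, one computes that such a point $X_0$ exists on the line $BC$, so the class ${\mathcal{I}}_{\alpha\beta\gamma}$ contains an entire one-parameter subfamily of (clockwise-oriented) triangles all sharing the single vertex $X_0$, while no other point of $BC$ carries any triangle of that orientation. Consequently your function $S(t)$ is not well defined at $t_0$ and, worse, never sees this subfamily, so the triangle produced by continuity-plus-coercivity need not minimize area over the full class. This is exactly the pitfall the paper's parametrization avoids: it indexes the family by the direction $\vec{w}$ of the side $XZ$, a compact parameter which selects at most one triangle per orientation even in the degenerate classes above. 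The repair is either to adopt that parametrization, or to treat the two orientation families separately and, in a degenerate family, parametrize by $Y\in AC$ instead of $X$; your coercivity argument survives in either form, and the remainder of your proof is unaffected.
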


\begin{proof} It suffices to only deal with the class 
${\mathcal I}_{\alpha\beta\gamma}$ since a similar reasoning
applies to all the classes in (\ref{IIC}). 
The first issue is to show that there exists at least one 
area-minimizer in ${\mathcal I}_{\alpha\beta\gamma}$. 
We briefly sketch the argument for this, which is based on results from
Calculus and Analytical Geometry. To get started, fix 
a unit vector $\vec{w}=w_1\vec{i}+w_2\vec{j}$ and consider the
class of all triangles $XYZ$ satisfying 
\begin{eqnarray}\label{1.const}
\aangle\,X=\alpha,\quad\aangle\,Y=\beta,\quad 
\aangle\,Z=\gamma,\quad X\in BC,\quad Z\in AB,\quad XZ\,\|\,\vec{w}.    
\end{eqnarray}
\noindent It is then clear that the coordinates of $Y$ and $Z$ depend 
linearly on those of $X$, with coefficients varying continuously with 
$w_1,w_2$. Since $X$ moves on a line (i.e., $BC$), it follows that the 
geometrical locus of the vertex $Y$ is itself a line. 
In addition, the characteristics of this line, called $L$, depend 
continuously on $w_1$, $w_2$. Now, the position of $X$ which, under the 
constraints (\ref{1.const}), yields a triangle $XYZ$ in the class 
${\mathcal{I}}_{\alpha\beta\gamma}$, is the one corresponding to the case 
when $Y=L\cap AC$. Linear algebra considerations then show that the 
coordinates of $X$ in this critical case are continuously dependent 
on $w_1,w_2$. Using the formula which expresses the area of 
a triangle in terms of the coordinates of its vertices, we then deduce that 
$|\Delta XYZ|$ -corresponding the case when 
$\Delta XYZ\in{\mathcal{I}}_{\alpha\beta\gamma}$- is a continuous 
function in $w_1$, $w_2$. Since $w_1$ and $w_2$ vary in a compact set, 
it follows that the assignment $(w_1,w_2)\mapsto |\Delta XYZ|$ is 
bounded and it attains its extrema. In particular, there exists an 
area-minimizing triangle in ${\mathcal{I}}_{\alpha\beta\gamma}$. 

To prove that there exists (at least) one area-maximizing triangle
in ${\mathcal{C}}_{\alpha\beta\gamma}$, we proceed as follows. 
Starting with an area-minimizing triangle $\Delta M_1N_1P_1$ 
(whose existence has just been established), construct a 
a new triangle, $\Delta M_2N_2P_2$ by taking the parallels through 
$A$, $B$ and $C$ to the sides of $\Delta\,M_1N_1P_1$.  
As a result, $\Delta M_2N_2P_2$ is homotopic to $\Delta\,M_1N_1P_1$ 
and circumscribed to $\Delta\,ABC$. Hence, 
$\Delta\,M_2N_2P_2\in{\mathcal C}_{\alpha\beta\gamma}$ and, 
thanks to Proposition~\ref{tata111}, 
\begin{eqnarray}\label{Min=H}
|\Delta M_2N_2P_2|\cdot|\Delta M_1N_1P_1|=|\Delta ABC|^2.
\end{eqnarray}
\noindent We claim that $\Delta M_2N_2P_2$ is area-maximizing in the
class ${\mathcal{C}}_{\alpha\beta\gamma}$. Indeed, if 
$\Delta M''N''P''\in{\mathcal{C}}_{\alpha\beta\gamma}$ is arbitrary, 
let $M'\in BA$, $N'\in BC$, $P'\in AC$ (cf. Figure~2) be such that
\begin{eqnarray}\label{1.1-Kj}
\frac{AM'}{M'B}=\frac{P''C}{CN''},\quad\frac{BN'}{N'C}=\frac{M''A}{AP''},
\quad\frac{CP'}{P'A}=\frac{N''B}{BM''}.
\end{eqnarray}
\noindent Proposition~\ref{tata111} gives that $\Delta\,M'N'P'$ and 
$\Delta\,M''N''P''$ are homotopic 
(thus $\Delta\,M'N'P'\in{\mathcal{I}}_{\alpha\beta\gamma}$) and 
\begin{eqnarray}\label{1.LK-4}
|\Delta M''N''P''|\cdot |\Delta M'N'P'|=|\Delta ABC|^2.
\end{eqnarray}
\noindent Since by definition $|\Delta M'N'P'|\geq |\Delta M_1N_1P_1|$, 
(\ref{Min=H}), (\ref{1.LK-4}) imply that 
$|\Delta M''N''P''|\leq |\Delta M_2N_2P_2|$. This justifies our claim
that $\Delta M_2N_2P_2$ is area-maximizing in the
class ${\mathcal{C}}_{\alpha\beta\gamma}$.

Going further, we note that proving (\ref{As-Qw}) uses a similar 
circle of ideas. Concretely, let $M'\in BA$, $N'\in BC$, $P'\in AC$ 
(cf. Figure~2) be such that

\begin{eqnarray}\label{1.1}
\frac{AM'}{M'B}=\frac{P_2C}{CN_2},\quad\frac{BN'}{N'C}=\frac{M_2A}{AP_2},
\quad\frac{CP'}{P'A}=\frac{N_2B}{BM_2}.
\end{eqnarray}
\noindent Proposition~\ref{tata111} shows that $\Delta\,M'N'P'$ and 
$\Delta\,M_2N_2P_2$ are homotopic and 
\begin{eqnarray}\label{1.1bis}
|\Delta M_2N_2P_2|\cdot |\Delta M'N'P'|=|\Delta ABC|^2.
\end{eqnarray}
\noindent The former property also entails 
$\Delta\,M'N'P'\in{\mathcal{I}}_{\alpha\beta\gamma}$ 
and, hence, 
\begin{eqnarray}\label{1.1bV}
|\Delta M_1N_1P_1|\leq |\Delta M'N'P'|. 
\end{eqnarray}
\begin{center}
\includegraphics[scale=1.0]{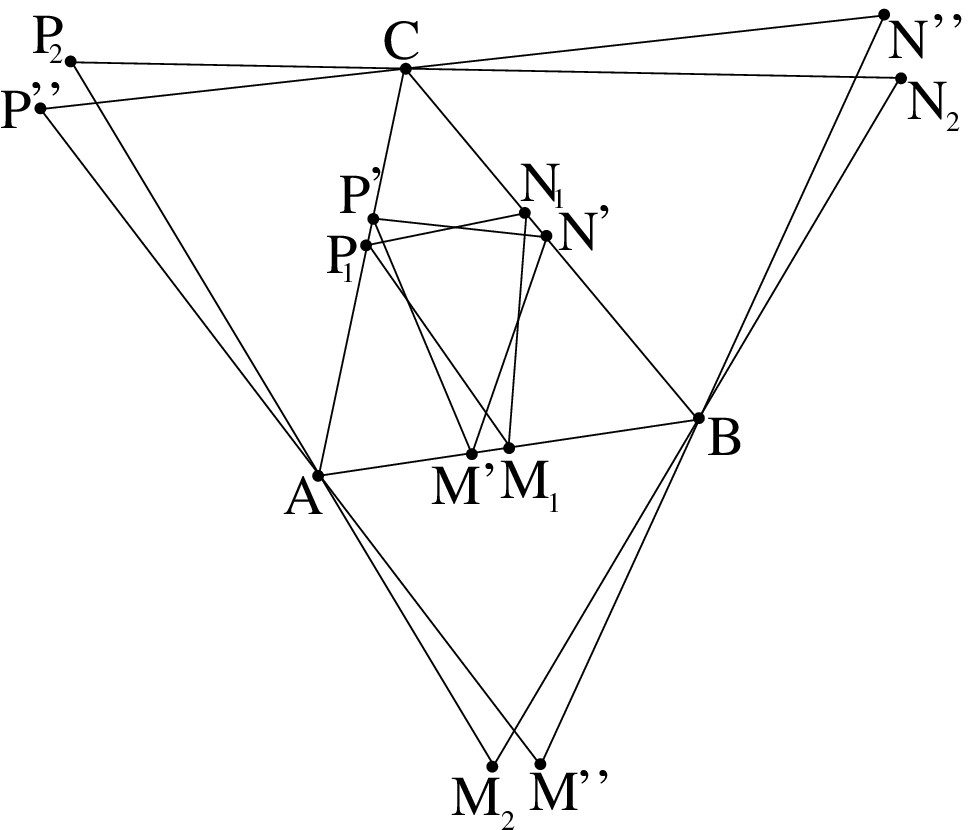}
\end{center}
\centerline{{\bf Figure~2}}

\vskip 0.15in

The parallels through $A$, $B$ and $C$ to the sides of the 
triangle $\Delta\,M_1N_1P_1$ determine the triangle $M''N''P''$ which is 
homotopic to $\Delta\,M_1N_1P_1$ and circumscribed to $\Delta\,ABC$. 
As a result, $\Delta\,M''N''P''\in{\mathcal C}_{\alpha\beta\gamma}$
which forces 
\begin{eqnarray}\label{1.2}
|\Delta M''N''P''|\leq |\Delta M_2N_2P_2|.
\end{eqnarray}
\noindent In this context, Proposition~\ref{tata111} also gives that 
\begin{eqnarray}\label{1.2bis}
|\Delta M''N''P''|\cdot |\Delta M_1N_1P_1|=|\Delta ABC|^2.
\end{eqnarray}

By combining (\ref{1.1bis})-(\ref{1.2bis}) we obtain
\begin{eqnarray}\label{1.3}
|\Delta ABC|^2 &=& |\Delta M_1N_1P_1||\Delta M''N''P''|
\nonumber\\[4pt]
&\leq & |\Delta M_2N_2P_2||\Delta M'N'P'|=|\Delta ABC|^2, 
\end{eqnarray}
\noindent so that, we actually have
$|\Delta M''N''P''|=|\Delta M_2N_2P_2|,\quad
|\Delta M_1N_1P_1|=|\Delta M'N'P'|$. Therefore,
$|\Delta ABC|^2=|\Delta M_1N_1P_1||\Delta M''N''P''|
=|\Delta M_1N_1P_1||\Delta M_2N_2P_2|$, 
finishing the proof of (\ref{As-Qw}).  
\end{proof}

\noindent{\bf Remark~1.} {\it It is implicit in the proof of the above 
theorem that if $\Delta M_1N_1P_1$ is an area-minimizer in the class
${\mathcal{I}}_{\alpha\beta\gamma}$, then the triangle $M_2N_2P_2$ that 
is in ${\mathcal{C}}_{\alpha\beta\gamma}$ and is homotopic to 
$\Delta M_1N_1P_1$ (constructed as in Proposition~\ref{tata111}) 
is an area-maximizer in the class ${\mathcal{C}}_{\alpha\beta\gamma}$.
Conversely, if $\Delta M_2N_2P_2$ is an area-maximizer in the class
${\mathcal{C}}_{\alpha\beta\gamma}$, then the triangle $M_1N_1P_1$ that 
is in ${\mathcal{I}}_{\alpha\beta\gamma}$ and is homotopic to 
$\Delta M_2N_2P_2$ (again, constructed as in Proposition~\ref{tata111}) 
is an area-minimizer in the class ${\mathcal{I}}_{\alpha\beta\gamma}$.}

\vskip 0.10in

The main result in this section describes the structure of the 
configuration consisting of a given triangle $ABC$ along with the 
area-minimizing triangle in ${\mathcal{I}}_{\alpha\beta\gamma}$ 
and the area-maximizing triangle in ${\mathcal{C}}_{\alpha\beta\gamma}$.
Recall that an antipedal triangle of a point $P$ with respect to $\Delta ABC$ 
is the triangle whose vertices are the intersection points of the 
perpendiculars to $PA$, $PB$, and $PC$, passing through $A$, $B$, and $C$, 
respectively. Also, the isogonal of a point $P$ in $\Delta ABC$ is the 
concurrence point of the reflections of the lines $PA$, $PB$, and $PC$ 
across the angle bisectors of $A$, $B$, and $C$. 

\begin{theorem}\label{tata444}
Retain notation and conventions introduced earlier. 
Given $\Delta ABC$, reference triangle, and $\Delta MNP$ fundamental triangle
with angles $\alpha,\beta,\gamma\in(0,\pi)$, let 
${\mathcal{I}}_{\alpha\beta\gamma}$ and
${\mathcal{C}}_{\alpha\beta\gamma}$ be as before. 

Then there exist a unique $\Delta\,EFD\in {\mathcal{I}}_{\alpha\beta\gamma}$ 
which is area-minimizing in this class, along with a unique 
$\Delta\,QRS\in {\mathcal{C}}_{\alpha\beta\gamma}$ which is 
area-maximizing in this class. In addition, these triangles enjoy 
the following properties:
\begin{enumerate}
\item[(i)] The triangles $EFD$ and $QRS$ are homotopic;
\item[(ii)] The triangles $EFD$ and $QRS$, are the pedal and the 
antipedal triangles of some points $K$ and $L$, respectively, 
with respect to $\Delta\,ABC$;
\item[(iii)] Let $T$ be the concurrence point of the projections of 
$Q$, $R$, $S$ on the sides of the triangle $ABC$. Then the pairs of points
$(L,K)$ and $(T,L)$ are isogonal in the triangles $ABC$ and $QRS$, 
respectively;
\item[(iv)] If $O$ is the homotopy center of 
$\Delta\,EFD$ and $\Delta\,QRS$, then $O$, $K$, $T$ lie on the same line.
\end{enumerate}
\end{theorem}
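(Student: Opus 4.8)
The plan is to identify $\Delta EFD$ as a pedal triangle (which forces uniqueness), to transport this to the circumscribed side via the homotopy of Proposition~\ref{tata111}, and then to read off the isogonal and collinearity statements from the resulting pedal/antipedal duality. \emph{Step 1: the minimizer is the pedal triangle of its Miquel point.} Given any $\Delta XYZ\in{\mathcal I}_{\alpha\beta\gamma}$, Miquel's pivot theorem provides a point $K$ lying on each of the circles $(AYZ)$, $(BZX)$, $(CXY)$, and, because the vertex--side--angle incidences in Definition~\ref{Def-Fun}(iii) fix the orientation of the inscribed triangle, this Miquel point is the same for every member of ${\mathcal I}_{\alpha\beta\gamma}$. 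For a given member, $A,Y,K,Z$ are concyclic, so the side cutting off $A$ has length $YZ=2R_A\sin A$, where $R_A$ is the radius of the circle $(AYZK)$ and $A$ denotes the angle of $\Delta ABC$ at $A$. Since that circle passes through the two fixed points $A$ and $K$, one has $R_A\ge\tfrac12|AK|$, with equality exactly when $AK$ is a diameter, i.e. when $\angle AYK=\angle AZK=\tfrac{\pi}{2}$. Hence the area, a fixed multiple of $YZ^2$, is smallest precisely when $Y,Z$ are the feet of the perpendiculars from $K$; the minimizer is therefore the pedal triangle of $K$, and in particular it is unique. This settles existence and uniqueness of $\Delta EFD$ and the pedal half of (ii), with $K$ its pedal point.

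\emph{Step 2: the maximizer and the antipedal/isogonal structure.} By the Remark following Theorem~\ref{tata333}, the triangle obtained from $\Delta EFD$ by the parallels-through-$A,B,C$ construction of Proposition~\ref{tata111} is the area-maximizer in ${\mathcal C}_{\alpha\beta\gamma}$; since three lines of prescribed directions through $A,B,C$ determine a unique circumscribed triangle, and since (by the converse in that Remark) any maximizer is homotopic to $\Delta EFD$, the maximizer $\Delta QRS$ is unique and homotopic to $\Delta EFD$, giving (i). To obtain (ii) and the first half of (iii) I would verify the classical lemma that the pedal triangle of a point $P$ and the antipedal triangle of its isogonal conjugate $P^{\ast}$ are homothetic: writing $\angle(AP,AB)=\theta$, an angle chase in the circle on diameter $AP$ shows that the pedal side at $A$ makes angle $\tfrac{\pi}{2}+A-\theta$ with $AB$, while $AP^{\ast}$ (the reflection of $AP$ in the bisector at $A$) makes angle $A-\theta$ with $AB$, so the pedal side at $A$ is perpendicular to $AP^{\ast}$; since the antipedal side at $A$ is perpendicular to $AP^{\ast}$ by definition, the two sides are parallel, and likewise at $B$ and $C$. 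Applying this with $P=K$ and $L:=K^{\ast}$, the antipedal triangle of $L$ has sides parallel to $\Delta EFD$ and is circumscribed to $\Delta ABC$, hence lies in ${\mathcal C}_{\alpha\beta\gamma}$; by the uniqueness just noted it coincides with $\Delta QRS$. This yields the antipedal half of (ii) and shows that $(L,K)$ is isogonal in $\Delta ABC$.

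\emph{Step 3: collinearity and the remaining isogonal relation.} Let $h$ be the homothety with center $O$ carrying $\Delta EFD$ onto $\Delta QRS$. As $h$ preserves directions, the perpendicular from $K$ to $BC$, which passes through the foot $D$ (a vertex of $\Delta EFD$), maps to the line through $h(D)=Q$ perpendicular to $BC$; repeating on all three sides shows that the three perpendiculars from $Q,R,S$ to the sides of $\Delta ABC$ concur at $h(K)$. Thus $T$ exists and equals $h(K)$, so $O,K,T$ are collinear as a point and its image under a homothety; this is (iv), and it simultaneously justifies the well-definedness of $T$. Finally, since $\Delta QRS$ is the antipedal triangle of $L$, the triangle $\Delta ABC$ is the pedal triangle of $L$ in $\Delta QRS$, with $B\in QS$, $C\in QR$ the feet on the sides meeting at $Q$; hence $LB\perp QS$ and $LC\perp QR$, so $Q,B,L,C$ lie on the circle with diameter $QL$. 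Combining this with $TQ\perp BC$ and a short inscribed-angle computation gives $\angle TQR=\angle LQS$, and symmetrically at $R$ and $S$, so $(T,L)$ is isogonal in $\Delta QRS$, completing (iii). The main obstacle I anticipate is the orientation bookkeeping in Step~2: one must track the vertex--side--angle correspondence carefully enough to be sure the antipedal triangle of $K^{\ast}$ lands in ${\mathcal C}_{\alpha\beta\gamma}$ and not in a differently labelled class, and the same care underlies the claim in Step~1 that the Miquel point is common to the whole class; once these conventions are pinned down, the angle chases in Steps~2--3 are routine.
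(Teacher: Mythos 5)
Your proposal is correct, but it takes a genuinely different route from the paper, so a comparison is in order. The paper works from the circumscribed side inward: it introduces the three loci circles $(O_1),(O_2),(O_3)$ on which the vertices of any member of ${\mathcal C}_{\alpha\beta\gamma}$ must lie, shows they concur at a point $L$, proves uniqueness of the maximizer from the inequality $SQ\le 2\,O_1O_2$ (with equality precisely when $SQ\parallel O_1O_2$), and only then obtains uniqueness of the minimizer via Remark~1; the pedal point $K$ appears indirectly, through the orthology of $\Delta ABC$ and $\Delta DEF$. You work from the inscribed side outward: the common Miquel pivot $K$ of ${\mathcal I}_{\alpha\beta\gamma}$, together with $YZ=2R_A\sin A$ and $R_A\ge\frac{1}{2}\,AK$, identifies the minimizer directly as the pedal triangle of $K$ and gives uniqueness at once; the maximizer is then handled by Remark~1 plus the observation that a circumscribed triangle with prescribed side directions through $A$, $B$, $C$ is unique, and the antipedal half of (ii) comes from the classical pedal/antipedal duality under isogonal conjugation rather than from the circles $(O_i)$. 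Both arguments prove (iv) identically (as the image of $K$ under the homothety). As for what each buys: the paper's explicit construction of $L$ and of $(O_1),(O_2),(O_3)$ is reused later (in Theorem~\ref{New-Th1W} and in the proof of the main theorem), whereas your route makes the pedal characterization immediate and, notably, actually proves the second half of (iii) --- that $(T,L)$ is isogonal in $\Delta QRS$, via the circle on diameter $QL$ through $B$ and $C$ --- a claim the paper's proof asserts but never argues, stopping after the isogonality of $(L,K)$ in $\Delta ABC$. Finally, the orientation bookkeeping you flag is a real issue, but it is one the paper shares: as literally defined, ${\mathcal I}_{\alpha\beta\gamma}$ contains two spiral (pivot) families, with pivots inside and outside the circumcircle (cf.\ Remark~3), so both proofs strictly establish uniqueness within one orientation class; the comparison needed to close this is exactly what the inversion argument of Theorem~\ref{tata3.1} supplies.
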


Before proceeding with the proof of this theorem, a comment is in order. 

\vskip 0.08in
\noindent{\bf Remark~2.} {\it As a corollary of the above theorem, there exists 
a unique area-minimizing triangle in each of the classes
${\mathcal{I}}$, ${\mathcal{I}}_{\alpha\beta\gamma}$, ..., listed in 
the first line of (\ref{IIC}). Moreover, there exists a unique area-maximizing 
triangle in each of the classes ${\mathcal{C}}$, 
${\mathcal{C}}_{\alpha\beta\gamma}$, ...., listed in 
the second line of (\ref{IIC}). In each case, similar properties to 
$(i)-(iv)$ above hold.}

\begin{center}
\includegraphics[scale=1.0]{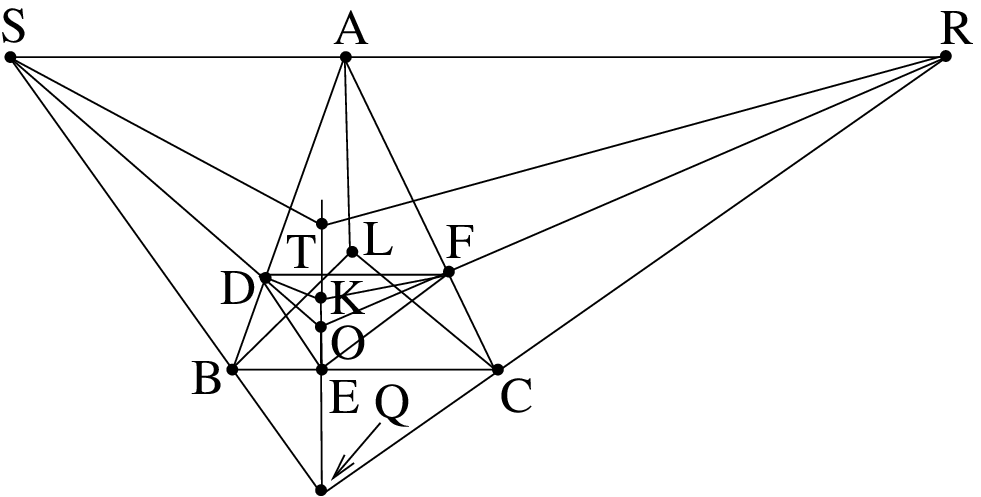}
\end{center}
\vskip 0.1in
\centerline{\bf Figure~3}
\vskip 0.15 in

\noindent{\it Proof of Theorem~\ref{tata444}.} By design, the triangles $QRS$ 
and $MNP$ are similar (cf. Figure~3), so that $\aangle\,{Q}=\aangle\,{M}$, 
$\aangle\,{R}=\aangle\,{N}$, $\aangle\,{S}=\aangle\,{P}$.   
If we do not impose the condition that the triangle $QRS$ has maximal area
in the class ${\mathcal{C}}_{\alpha\beta\gamma}$, then the triangle $QRS$ 
is not completely determined. There are, however, certain restrictions 
on the location of its vertices, due solely to the membership to 
${\mathcal{C}}_{\alpha\beta\gamma}$. Concretely, the point $S$ must lie 
on the circle passing through $A$ and $B$ and such that the arc 
$\overset{\frown}{AB}=2\aangle\,{S}$. Let us denote by $O_1$ the 
center of this circle (see Figure~4). Likewise, the point $Q$ lies on a circle 
(whose center we denote by $O_2$) passing through $B$ and $C$ 
and such that $\overset{\frown}{BC}=2\aangle\,{Q}$. Finally, the point 
$R$ lies on a circle (centered at some point, denoted by $O_3$) passing 
through $A$ and $C$ and for which $\overset{\frown}{AC}=2\aangle\,{R}$.
  
Since $\aangle\,{M}+\aangle\,{N}+\aangle\,{P}
=\aangle\,{Q}+\aangle\,{R}+\aangle\,{S}=\pi$, it is not difficult 
to see that these three circles, have a common point, which we denote by $L$.
Observe next that a particular position of the point $S$ on the circle $(O_1)$
determines also the positions of the points $Q$ and $R$ on the circles
$(O_2)$ and $(O_3)$, respectively. The idea is now to determine the 
position of the point $S$ on the circle $(O_1)$ for which the triangle 
$QRS$ has maximal area.

\begin{center}
\includegraphics[scale=0.7]{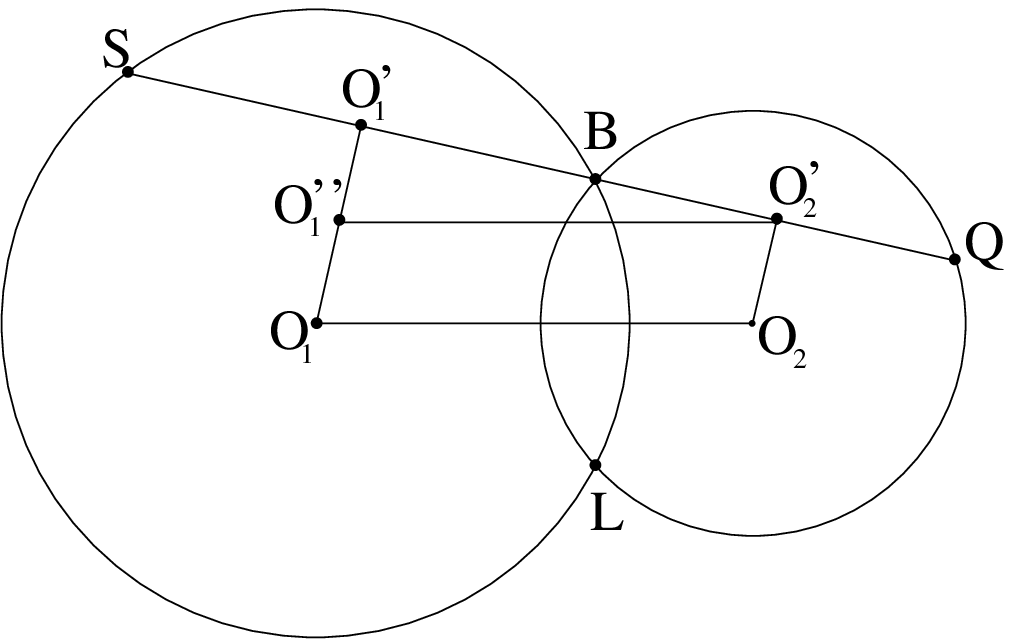}
\end{center}
\centerline{{\bf Figure~4}}
\vskip 0.15 in

Since the angles of $\Delta QRS$ are fixed, maximizing $|\Delta QRS|$ 
amounts to maximizing the length of one of its sides, say $SQ$. 
Let $O'_1$ and $O'_2$ be, respectively, the projections of $O_1$ and 
$O_2$ onto the line $SQ$, and let $O''_1\in O'_1O_1$ be such that 
$O''_1O'_2\parallel O_1O_2$. Then we can write 
\begin{eqnarray}\label{boise1}
O_1O_2=O''_1O'_2\geq O'_1O'_2=BO'_1+BO'_2
=\frac{BS}{2}+\frac{BQ}{2}=\frac{SQ}{2}.
\end{eqnarray}
\noindent This implies $2O_1O_2\geq SQ$, an inequality from which we see
that the maximum of $|\Delta QRS|$ is attained when $SQ\parallel O_1O_2$. 

Since the location of the points $O_1$, $O_2$, $O_3$ depends only 
on $\Delta ABC$ and angles $\alpha,\beta,\gamma$, the result just 
established shows that there exists a {\it unique} area-maximizing 
triangle in ${\mathcal{C}}_{\alpha\beta\gamma}$. Given an area-minimizing
triangle $\Delta DEF$ in the class ${\mathcal{I}}_{\alpha\beta\gamma}$,
we can associate with it a homotopic triangle $\Delta QRS$ in 
${\mathcal{C}}_{\alpha\beta\gamma}$, which can be constructed following 
the recipe given in Proposition~\ref{tata111}. Using the Remark~1, 
the latter triangle is area-maximizing in ${\mathcal{C}}_{\alpha\beta\gamma}$ 
and, hence, must coincide with the unique area-maximizing triangle described 
in the first part of the current proof. This shows that $\Delta DEF$ 
is itself uniquely determined by the property that it is area-minimizing 
in ${\mathcal{I}}_{\alpha\beta\gamma}$. This concludes the proof of 
the first claim made in the statement of Theorem~\ref{tata444}. 
The above considerations also show that $\Delta DEF$ and $\Delta QRS$ 
are homotopic, thus proving item $(i)$. 

As regards $(ii)$, let us recall from the first part of the proof 
that the triangle $QRS$ has a maximal area when $S=LO_1\cap (O_1)$, 
$Q=LO_2\cap (O_2)$, $R=LO_3\cap (O_3)$. In particular, this implies that  
the triangles $QRS$ and $O_1O_2O_3$ are homotopic with homotopy center 
$L$ and dilation factor $\frac{1}{2}$. Next, since $O_1O_2\parallel SQ$
and $O_1O_2\perp BL$, it follows that $BL\perp SQ$.
Similarly $LC\perp RQ$ and $LA\perp SR$. Altogether, these observations 
imply that the triangle $SRQ$ is the antipedal of $L$ with respect to the 
triangle $ABC$. Now, using that $DF\parallel SR$, $DE\parallel SQ$, and 
$FE\parallel RQ$ we obtain that $AL\perp DF$, $BL\perp DE$, and $CL\perp EF$. 
This means that the projections of the vertices of $\Delta ABC$ on the sides 
of $\Delta DEF$ are concurrent, therefore establishing that the triangles 
$ABC$ and $DEF$ are orthologic. As is well-know, this gives that 
the perpendiculars from $D,E,F$ onto $AB,$ $BC,$ $AC$ respectively, 
are also concurrent in a point, say $K$. Hence, $\Delta DEF$ is the pedal
triangle of $K$ with respect to the triangle $ABC$. This finishes the proof
of $(ii)$.  
 
Consider next $(iii)$. Since the triangles $ABC$ and $SQR$ are orthologic, 
there exists a point $T$ such that $ST\perp AB,$ $QT\perp BC,$ and 
$RT\perp AC$. In the quadrilateral $ADKF$ we have 
\begin{eqnarray}\label{1.5}
\aangle\,{KAF}=\aangle\,{FDK}\quad\mbox{ and }\quad 
\aangle\,{FDK}+\aangle\,{FDA}=\aangle\,{FDA}+\aangle\,{LAD}=\frac{\pi}{2}.
\end{eqnarray}
\noindent Therefore, 
\begin{eqnarray}\label{1.6}
\aangle\,{FDK}=\aangle\,{LAD}.
\end{eqnarray}
\noindent From (\ref{1.5}) and (\ref{1.6}) we see that the lines 
$AL$ and $AK$ are isogonal. Similarly, $BL$ and $BK$ 
are isogonal, proving that $L$ and $K$ are isogonal points 
in $\Delta\,ABC$. This concludes the proof of $(iii)$. 

To prove $(iv)$, we start by observing that the dilation 
of center $O$ and factor $\frac{SQ}{DE}$ (i.e., the dilation which takes
$\Delta DEF$ into $\Delta SQR$) transforms the lines $DK$, $EK$ 
and $FK$ into the lines $ST$, $RT$ and $QT$, respectively. 
(Indeed, this is a direct corollary of the fact that dilations map
parallel lines into parallel lines.) Thus, under such a transformation, 
the point $K$ is mapped into $T$ and, as a consequence, the points $O$, 
$K$ and $T$ are collinear. The proof of Theorem~\ref{tata444}
is now finished. 
\hfill$\Box$
\vskip 0.08in

The result below essentially says the that a pedal triangle 
(relative to $\Delta ABC$) is area-minimizing amongst all 
triangles of a similar shape inscribed in $\Delta ABC$. 
\begin{theorem}\label{New-Th1W}
Let $P$ be a point contained inside the circumcircle of a given triangle $ABC$. 
Let $D,E,F$ be the projections of $P$ onto the sides $BC$, $AC$ and 
$AB$, respectively, and set 
\begin{eqnarray}\label{Yr-W1}
\alpha:=\aangle\,EDF,\quad
\beta:=\aangle\,DEF,\quad
\gamma:=\aangle\,EFD. 
\end{eqnarray}
\noindent Then $\Delta DEF$ is the area-minimizing triangle in the class 
${\mathcal{I}}_{\alpha\beta\gamma}$ (cf. (ii) of Definition~\ref{Def-Fun}). 
\end{theorem}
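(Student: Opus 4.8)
The plan is to use the structure theorem (Theorem~\ref{tata444}) to reduce the statement to a comparison between two pedal triangles of the same shape, and then to invoke geometric inversion in the circumcircle to decide which of the two has smaller area. First I would record the elementary fact that the shape of a pedal triangle is governed entirely by the distances from the pedal point to the vertices of $\Delta ABC$. Since $A,E,P,F$ lie on the circle of diameter $AP$, the chord $EF$ subtends the inscribed angle $\aangle\,EAF=\aangle\,A$, whence $EF=AP\cdot\sin A$; likewise $FD=BP\cdot\sin B$ and $DE=CP\cdot\sin C$. Thus the side-lengths of $\Delta DEF$ are proportional to $AP\cdot\sin A$, $BP\cdot\sin B$, $CP\cdot\sin C$, and in particular the angle $\alpha$ opposite $EF$ (i.e.\ at the vertex $D\in BC$) is determined by the ratios $AP:BP:CP$. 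Reading this backwards, the triple $(\alpha,\beta,\gamma)$ \emph{together with its assignment to the sidelines} $BC,AC,AB$ fixes the ratios $AP:BP:CP$; hence the points whose pedal triangle lies in ${\mathcal I}_{\alpha\beta\gamma}$ form the intersection of two Apollonius circles and therefore number at most two. Clearly $\Delta DEF\in{\mathcal I}_{\alpha\beta\gamma}$ by construction.

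Next I would exhibit the second such point by inverting $P$ in the circumcircle. Let $O,R$ denote the circumcenter and circumradius and let $P'$ be the image of $P$ under inversion in $(O,R)$. The inversion distance formula (applied with $A$ fixed, as $A$ lies on the circle of inversion) gives $P'A=\tfrac{R}{OP}\,PA$, and likewise for $B$ and $C$; consequently $P'A:P'B:P'C=PA:PB:PC$, so $P$ and $P'$ have pedal triangles of exactly the same shape and with the same angle-to-sideline correspondence. Thus $\{P,P'\}$ is precisely the at-most-two-element set found above. Since $P$ lies inside the circumcircle we have $OP<R$, hence $OP'=R^{2}/OP>R$ and $P'$ lies outside; the pedal triangle of $P'$ still has its vertices on the \emph{lines} $BC,AC,AB$, so it too belongs to ${\mathcal I}_{\alpha\beta\gamma}$.

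The comparison of areas is then immediate: the pedal triangles of $P$ and of $P'$ are similar with corresponding sides in the ratio $PA:P'A=OP:R$, so
\[
\frac{|\Delta DEF|}{|\text{pedal triangle of }P'|}=\Bigl(\frac{OP}{R}\Bigr)^{2}<1 .
\]
Finally I would invoke Theorem~\ref{tata444}: the \emph{unique} area-minimizer of ${\mathcal I}_{\alpha\beta\gamma}$ is a pedal triangle, hence the pedal triangle of some point $K$ with $KA:KB:KC=PA:PB:PC$; by the second paragraph $K\in\{P,P'\}$. Were $K=P'$, the minimizer would be the pedal triangle of the outside point, which by the displayed inequality is strictly larger in area than the competitor $\Delta DEF\in{\mathcal I}_{\alpha\beta\gamma}$ --- contradicting minimality. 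Therefore $K=P$, and $\Delta DEF$ is the area-minimizer.

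The main obstacle is the bookkeeping in the middle step. One must verify not merely that $P$ and $P'$ yield similar pedal triangles but that the angle-to-sideline correspondence is genuinely the same for both, so that both land in ${\mathcal I}_{\alpha\beta\gamma}$ rather than in one of the permuted classes of (\ref{IIC}); and one should dispatch the degenerate configurations, namely $P=O$ (where $P'$ escapes to infinity, so that only the finite point $P$ survives and $K=P$ is forced) and the case where a ratio equals $1$ (where an Apollonius circle degenerates to a line, still a circline meeting the other in at most two points). Once the two candidate pedal points are correctly pinned down, the inside/outside dichotomy supplied by inversion does the rest.
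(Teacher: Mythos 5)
Your proposal is correct, but it takes a genuinely different route from the paper's proof. The paper works on the circumscribed side of the duality: it passes to the isogonal conjugate $K$ of $P$, forms the antipedal triangle $QRS$ of $K$, shows via the inscribable quadrilateral $AFPE$ that $QRS$ and $DEF$ are homotopic, and then observes that $K$ is the common point of the three circles $(O_1)$, $(O_2)$, $(O_3)$ constructed in the proof of Theorem~\ref{tata444}; consequently $QRS$ is homotopic to $\Delta O_1O_2O_3$, hence it is the unique area-maximizer in ${\mathcal C}_{\alpha\beta\gamma}$, and the minimizer--maximizer duality (Remark~1) forces $\Delta DEF$ to be the minimizer in ${\mathcal I}_{\alpha\beta\gamma}$. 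You stay entirely on the inscribed side: the identities $EF=AP\sin A$, $FD=BP\sin B$, $DE=CP\sin C$ show that membership of a pedal triangle in ${\mathcal I}_{\alpha\beta\gamma}$ determines the ratios $PA:PB:PC$, the two Apollonius circles cap the set of admissible pedal points at two, inversion in the circumcircle exhibits the second point $P'$ and at the same time yields the strict inequality $|\Delta DEF|=(OP/R)^2\,|\Delta D'E'F'|<|\Delta D'E'F'|$ for the pedal triangle $D'E'F'$ of $P'$, and the uniqueness-plus-pedal clauses of Theorem~\ref{tata444} then leave $P$ as the only possible pedal point of the minimizer. In effect, your middle steps re-prove the paper's Theorem~\ref{tata3.1} (pedal triangles of inverse points are similar, with the interior point giving the smaller area) by trigonometry instead of the paper's tangent-line angle chase, and in a sharper form: your Apollonius argument removes the collinearity hypothesis $N\in OM$ required in that theorem's converse, and it makes explicit the two-candidate (interior/exterior) structure that the paper only records in Remark~3. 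What the paper's route buys instead is an explicit identification of the associated area-maximizer in ${\mathcal C}_{\alpha\beta\gamma}$ (the antipedal triangle of the isogonal conjugate), which is information reused elsewhere; your route consumes only the minimal consequences of the structure theorem. Two small points you should still write out: the two Apollonius circlines are genuinely distinct (their centers lie on the distinct lines $AB$ and $BC$, and neither can be centered at $B$), so the at-most-two claim is legitimate; and the appeal to Theorem~\ref{tata444} is not circular, since its proof nowhere uses the present statement.
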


\noindent{\it Proof.} Let $K$ be the isogonal of $P$ (relative to 
$\Delta ABC$) and denote by $\Delta QRS$ the antipedal of $K$ with 
respect to $\Delta ABC$. Since $\Delta\,DEF$ is the pedal triangle of the 
point $P$ with respect to $\Delta\,ABC$, then quadrilateral
$AFPE$ is inscribable. This implies that 
$\aangle PEF+\aangle PAF=\frac{\pi}{2}$. Next, since $K$ is the 
isogonal of $P$, this further entails 
$\aangle PEF+\aangle KAE=\frac{\pi}{2}$. This means that $AK\bot EF$, i.e. 
$RS\|EF$. Via a similar reasoning, we can show that the two other pairs 
of corresponding sides in $\Delta QRS$ and $\Delta DEF$ are parallel. 
This shows that $\Delta QRS$ and $\Delta DEF$ are homotopic. 
In particular, the angles of $\Delta QRS$ are $\alpha$, $\beta$, $\gamma$. 

Recall the (three, concurrent) circles $(O_1)$, $(O_2)$, $(O_3)$ 
introduced in the proof of Theorem~\ref{tata444}. Then 
$S\in (O_1)$, $R\in (O_2)$, $Q\in (O_3)$. Given that the 
quadrilateral $AKCR$ is inscribable, it follows that $K\in (O_1)$. 
Similarly, $K\in (O_2)$, $K\in (O_3)$. Hence, $K$ is the 
common point of $(O_1)$, $(O_2)$, $(O_3)$, which implies that 
$\Delta QRS$ is homotopic to $\Delta O_1O_2O_3$. As pointed out already 
in the proof of Theorem~\ref{tata444}, this necessarily implies that 
$\Delta QRS$ is a maximizer in the class ${\mathcal{C}}_{\alpha\beta\gamma}$. 
In turn, by $(iii)$ in Theorem~\ref{tata444} this forces 
$\Delta DEF$ to be an area minimizer in ${\mathcal{I}}_{\alpha\beta\gamma}$.
\hfill$\Box$
\vskip 0.08in

We end with a result which appears to be folklore
(see \cite{Ga}; a new proof has been given in \cite{AM}). 
\begin{theorem}\label{tata100}
Assume $ABC$ to be a given triangle and denote by $O$ and $R$ the center 
and the radius of the circumcircle, respectively. 
Let $M$ be an arbitrary point in $\Delta ABC$ and let 
$\Delta\alpha\beta\gamma$ be the pedal triangle of $M$ with respect to 
$\Delta ABC$. Then 
\begin{eqnarray}\label{Oprisan} 
\frac{|\Delta\alpha\beta\gamma|}{|\Delta ABC|}=\frac{|R^2-OM^2|}{4R^2}.
\end{eqnarray}
\end{theorem}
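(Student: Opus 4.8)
The plan is to compute the area of the pedal triangle directly from its side lengths and its angles, and then to reduce the desired proportion to a single scalar identity that I verify by a short vector computation based at the circumcenter $O$. Write $D$, $E$, $F$ for the feet of the perpendiculars dropped from $M$ onto $BC$, $CA$, $AB$, so that $\Delta\alpha\beta\gamma=\Delta DEF$. First I would record the side lengths. Since $\angle MFA=\angle MEA=\tfrac{\pi}{2}$, the four points $A,F,M,E$ lie on a circle with diameter $MA$; in that circle the chord $EF$ subtends the inscribed angle $\angle FAE=A$, so the extended law of sines gives $EF=MA\sin A$. Repeating this at $B$ and $C$ yields $FD=MB\sin B$ and $DE=MC\sin C$.

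Next I would pin down the angles of $\Delta DEF$. The quadrilaterals $BDMF$ and $CDME$ are cyclic as well (with diameters $MB$ and $MC$), so $\angle MDF=\angle MBA$ and $\angle MDE=\angle MCA$; adding these gives $\angle FDE=\angle MBA+\angle MCA$, and since $\angle MBC+\angle MCB=(B-\angle MBA)+(C-\angle MCA)$ one gets the clean relation $\angle BMC=A+\angle FDE$. The analogous computation at the vertex $F$ gives that the angle of $\Delta DEF$ at $F$ equals $\angle AMB-C$. Hence
\[
|\Delta DEF|=\tfrac12\,EF\cdot FD\,\sin(\angle F)=\tfrac12\,MA\cdot MB\,\sin A\,\sin B\,\sin(\angle AMB-C).
\]

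Expanding $\sin(\angle AMB-C)$ and using $MA\cdot MB\sin(\angle AMB)=2|\Delta MAB|$ together with $MA\cdot MB\cos(\angle AMB)=\vec{MA}\cdot\vec{MB}$, the whole problem collapses to the scalar identity
\[
2|\Delta MAB|\cot C=(R^2-OM^2)+\vec{MA}\cdot\vec{MB}.
\]
To prove it I would put the origin at $O$ and write $\vec{MA}=\vec{OA}-\vec{OM}$, $\vec{MB}=\vec{OB}-\vec{OM}$. Using $|\vec{OA}|=|\vec{OB}|=R$ and the central-angle value $\vec{OA}\cdot\vec{OB}=R^2\cos 2C$, the right-hand side reduces to $2R^2\cos^2 C-\vec{OM}\cdot(\vec{OA}+\vec{OB})$. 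The left-hand side, rewritten through the (signed) area expression $2|\Delta MAB|=\vec{OA}\times\vec{OB}+(\vec{OB}-\vec{OA})\times\vec{OM}$ and the fact that $\vec{OB}-\vec{OA}$ is the $90^\circ$-rotation of $\vec{OA}+\vec{OB}$ scaled by $\tan C$ (both follow from $|\vec{OA}|=|\vec{OB}|$ and $\angle AOB=2C$), reduces to the same expression. Feeding this back into the displayed area formula gives $MA\cdot MB\,\sin(\angle AMB-C)=(R^2-OM^2)\sin C$, hence $|\Delta DEF|=\tfrac12(R^2-OM^2)\sin A\sin B\sin C$; dividing by $|\Delta ABC|=2R^2\sin A\sin B\sin C$ yields exactly $(R^2-OM^2)/4R^2$.

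The main obstacle is not any one computation but the bookkeeping of configuration and sign. The angle chase in the second step tacitly fixes how $M$ sits relative to the sides, and once $M$ lies outside $\Delta ABC$ (or outside the circumcircle) the inscribed-angle equalities must be read with directed angles, while $|\Delta MAB|$, $\sin(\angle AMB-C)$ and $R^2-OM^2$ may each flip sign. To land the formula with the modulus $|R^2-OM^2|$ uniformly, I would carry signed (directed) angles and signed areas throughout, so that the scalar identity holds as an equality of signed quantities and the absolute value only appears when one finally passes to the geometric area; alternatively one treats the two essentially different cases---$M$ inside versus outside the circumcircle---separately. Everything else is elementary trigonometry together with the one-line vector identity above.
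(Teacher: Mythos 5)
The paper offers no proof to compare against: Theorem~\ref{tata100} is stated as folklore, with \cite{Ga} and \cite{AM} cited in place of an argument, so your proof stands on its own merits. It is correct. The side lengths $EF=MA\sin A$, $FD=MB\sin B$, $DE=MC\sin C$, the angle relation $\angle DFE=\angle AMB-C$, and the reduction of the theorem to the single identity $MA\cdot MB\,\sin(\angle AMB-C)=(R^2-OM^2)\sin C$ are all sound, and your vector verification of that identity checks out: with counterclockwise labelling one has $(\vec{OA}\times\vec{OB})\cot C=R^2\sin 2C\cot C=2R^2\cos^2C$ and $\bigl((\vec{OB}-\vec{OA})\times\vec{OM}\bigr)\cot C=-(\vec{OA}+\vec{OB})\cdot\vec{OM}$, which together match $(R^2-OM^2)+\vec{MA}\cdot\vec{MB}=2R^2\cos^2C-(\vec{OA}+\vec{OB})\cdot\vec{OM}$. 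Two comments. First, the configuration issue you flag is genuine even under the stated hypothesis that $M$ lies inside $\Delta ABC$: in an obtuse triangle a perpendicular foot can fall outside the corresponding side, so the cyclic-quadrilateral equalities must be read as directed angles modulo $\pi$; the exact equality $\angle DFE=\angle AMB-C$ is then recovered because both quantities lie in $(0,\pi)$ for interior $M$ (as $\angle AMB>\angle ACB$). This care is not optional for the paper's purposes, since Theorem~\ref{tata100} is later invoked, in the proof of Theorem~\ref{tata3.1}, for points \emph{outside} the circumcircle, where $R^2-OM^2$ is negative; your signed formulation is the version actually needed there. Second, your scalar identity is the power of the point $M$ in disguise, and the classical folklore proof (presumably the one in \cite{Ga}) obtains it synthetically: let the line $AM$ meet the circumcircle again at $X$; then $\angle AXB=\angle ACB=C$, hence $\angle MBX=\angle AMB-C$, and the law of sines in $\Delta MBX$ combined with $MA\cdot MX=R^2-OM^2$ gives the identity at once. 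That route is shorter and keeps the appearance of $R^2-OM^2$ conceptual rather than computational, but it carries the same configuration caveats; your vector argument trades that elegance for a sign discipline that is easier to make airtight.
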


\section{The Main Result}

This section is devoted to stating and proving the main result in this 
paper, Theorem~\ref{tata3.3}. First, we take care of a 
number of prerequisites, starting with the result below. 
\begin{theorem}\label{tata3.1} 
Let $M$ be a point contained in the circumcircle of $\Delta\,ABC$, 
which is assumed to have center $O$ and radius $R$. In addition, let 
$N\in OM$ be such that $OM\cdot ON=R^2$. Then the pedal triangles, 
$\Delta M_1M_2M_3$ and $\Delta N_1N_2N_3$, of the points $M$ and $N$ with 
respect to $\Delta\,ABC$ are similar (with pairs of equal angles having 
vertices located on the same sides of $\Delta\,ABC$).
Furthermore, 
\begin{eqnarray}\label{Ar-1GT}
|\Delta M_1M_2M_3|\leq |\Delta N_1N_2N_3|. 
\end{eqnarray}
Conversely, if $M$ and $N\in OM$ are two points such that their pedal 
triangles with respect to $\Delta\,ABC$ are similar 
(with pairs of equal angles having vertices on the same sides 
of $\Delta\,ABC$), then $OM\cdot ON=R^2$. 
\end{theorem}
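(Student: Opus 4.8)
The plan is to reduce both directions of the statement to the single ratio identity $\frac{MA}{NA}=\frac{MB}{NB}=\frac{MC}{NC}$, which I claim governs the shape of a pedal triangle. First I would record the classical pedal side-length formula: if a point $P$ has feet $D,E,F$ on $BC,CA,AB$, then the quadrilateral $AEPF$ has right angles at $E$ and $F$, hence is cyclic with diameter $AP$; the chord $EF$ therefore subtends $\angle A$, giving $EF=PA\sin A$, and cyclically $FD=PB\sin B$, $DE=PC\sin C$. Consequently the three sides of the pedal triangle of $P$ are proportional to $PA\sin A$, $PB\sin B$, $PC\sin C$. Since $\sin A,\sin B,\sin C$ depend only on $\Delta ABC$, two points $M$ and $N$ have directly similar pedal triangles, with the vertex lying on a given side of $\Delta ABC$ corresponding across both triangles, precisely when $\frac{MA}{NA}=\frac{MB}{NB}=\frac{MC}{NC}$.

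For the forward implication I would invoke the inversion distance formula. Since $A$ lies on the circle of inversion (so $OA=R$) and $N$ is the image of $M$ under inversion in the circumcircle, one has $NA=\frac{R^2\,MA}{OM\cdot OA}=\frac{R}{OM}\,MA$, and likewise for $B$ and $C$. Hence $\frac{MA}{NA}=\frac{MB}{NB}=\frac{MC}{NC}=\frac{OM}{R}$, which by the previous paragraph yields the asserted direct similarity, with equal angles sitting on the same sides of $\Delta ABC$.

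The area comparison then follows from Theorem~\ref{tata100}, which gives $\frac{|\Delta M_1M_2M_3|}{|\Delta ABC|}=\frac{|R^2-OM^2|}{4R^2}$ and the analogous expression for $N$. Because $M$ lies inside the circumcircle we have $OM<R$, and $OM\cdot ON=R^2$ forces $ON=R^2/OM>R$; thus the two absolute values open as $R^2-OM^2$ and $ON^2-R^2$, so the desired inequality $|\Delta M_1M_2M_3|\le|\Delta N_1N_2N_3|$ reduces to $2R^2\le OM^2+ON^2$. This is exactly the AM--GM inequality applied to $OM^2$ and $ON^2$ in conjunction with $OM\cdot ON=R^2$, with equality only when $OM=ON=R$, i.e.\ when $M=N$ lies on the circle.

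For the converse, direct similarity again produces a common ratio $\frac{MA}{NA}=\frac{MB}{NB}=\frac{MC}{NC}=k$. For distinct $M\neq N$ one must have $k\neq1$, since $k=1$ would place $A,B,C$ on the perpendicular bisector of $MN$, which is impossible for a genuine triangle; hence the locus $\{X:XM/XN=k\}$ is the Apollonius circle of the pair $(M,N)$. This circle passes through $A$, $B$, $C$ and therefore coincides with the circumcircle of $\Delta ABC$. Invoking the defining property that the two foci of an Apollonius circle are inverse points with respect to it, applied to the circumcircle, I conclude that $O$, $M$, $N$ are collinear and $OM\cdot ON=R^2$. I expect this converse to be the main obstacle: one must pin down the side-to-vertex correspondence carefully so that the hypothesized similarity really translates into the unpermuted ratio identity rather than a permuted one, and then correctly identify the Apollonius circle and verify (or cite) that its foci are mutually inverse.
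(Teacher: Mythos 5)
Your proposal is correct, and it reaches the similarity statement by a genuinely different route from the paper. The paper's proof inverts the circles $AMB$, $BMC$, $AMC$ in the circumcircle, uses conformality of inversion plus a tangent-line angle chase to establish $\aangle\,BNA=\aangle\,BMA-2\aangle\,ACB$, and then, via the cyclic quadrilaterals at the pedal feet, converts this into the pedal-angle identities $\aangle\,M_1=\aangle\,BMA-\aangle\,ACB$ and $\aangle\,N_1=\aangle\,BNA+\aangle\,ACB$; its converse is argued by running the same angle computation backwards, and is stated quite tersely. You instead reduce both directions to the classical side-length formula for pedal triangles ($EF=PA\sin A$, etc.), so that similarity with the prescribed side-to-vertex correspondence becomes equivalent to the single condition $MA/NA=MB/NB=MC/NC$; the forward implication is then the one-line inversion distance formula $NA=(R/OM)\,MA$, and the converse follows by recognizing the circumcircle as the Apollonius circle of the pair $(M,N)$ and citing the classical fact that the two base points of an Apollonius circle are inverse in it (true, and checkable by a short computation, so the citation is legitimate). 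The area inequality via Theorem~\ref{tata100} and AM--GM is identical in both treatments. Your route is shorter, configuration-free, and makes the converse fully rigorous -- in that respect it is more solid than the paper's own converse. What the paper's longer angle chase buys is the explicit identity $\aangle\,M_1=\aangle\,BMA-\aangle\,ACB$, which is reused verbatim as (\ref{NbW-3}) in the proof of the main result, Theorem~\ref{tata3.3}, so its extra work pays off later in the paper. One caveat on your write-up: SSS proportionality yields similarity but not \emph{direct} similarity; in fact here the two pedal triangles are oppositely oriented, since $M$ lies inside and $N$ outside the circumcircle (the signed version of Theorem~\ref{tata100} changes sign across the circle), so you should drop the word ``directly'' -- harmless for the theorem, which asserts only similarity with the stated vertex correspondence, but worth correcting.
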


\begin{proof} Let $X,$ $Y$ and $Z$ be the centers of the circles $AMB$, 
$BMC$ and $AMC$, respectively (see Figure~5). By an inversion of center 
$O$ and modulus $R$, the triangle $ABC$ and the circle $(O)$ remain unchanged. 
The circles $(X)$, $(Y)$ and $(Z)$ are transformed in the circles $(X')$, 
$(Y')$ and $(Z')$, respectively, passing through $A$ and $B$, $B$ and $C$, $A$ 
and $C$, respectively, and have a common point $N$, which is the 
inverse transform of $M$ (see Figure~6).

\begin{center}
\includegraphics[scale=1.0]{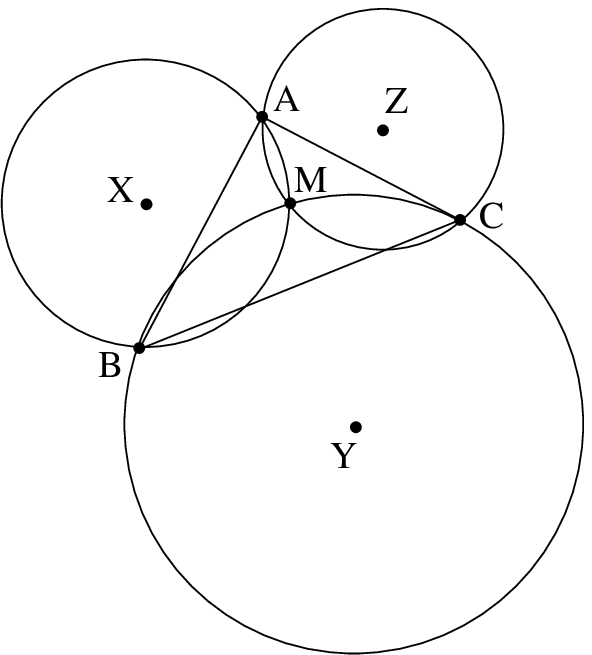}
\end{center}
\centerline{{\bf Figure~5}}
\vskip 0.10in

\begin{center}
\includegraphics[scale=1.0]{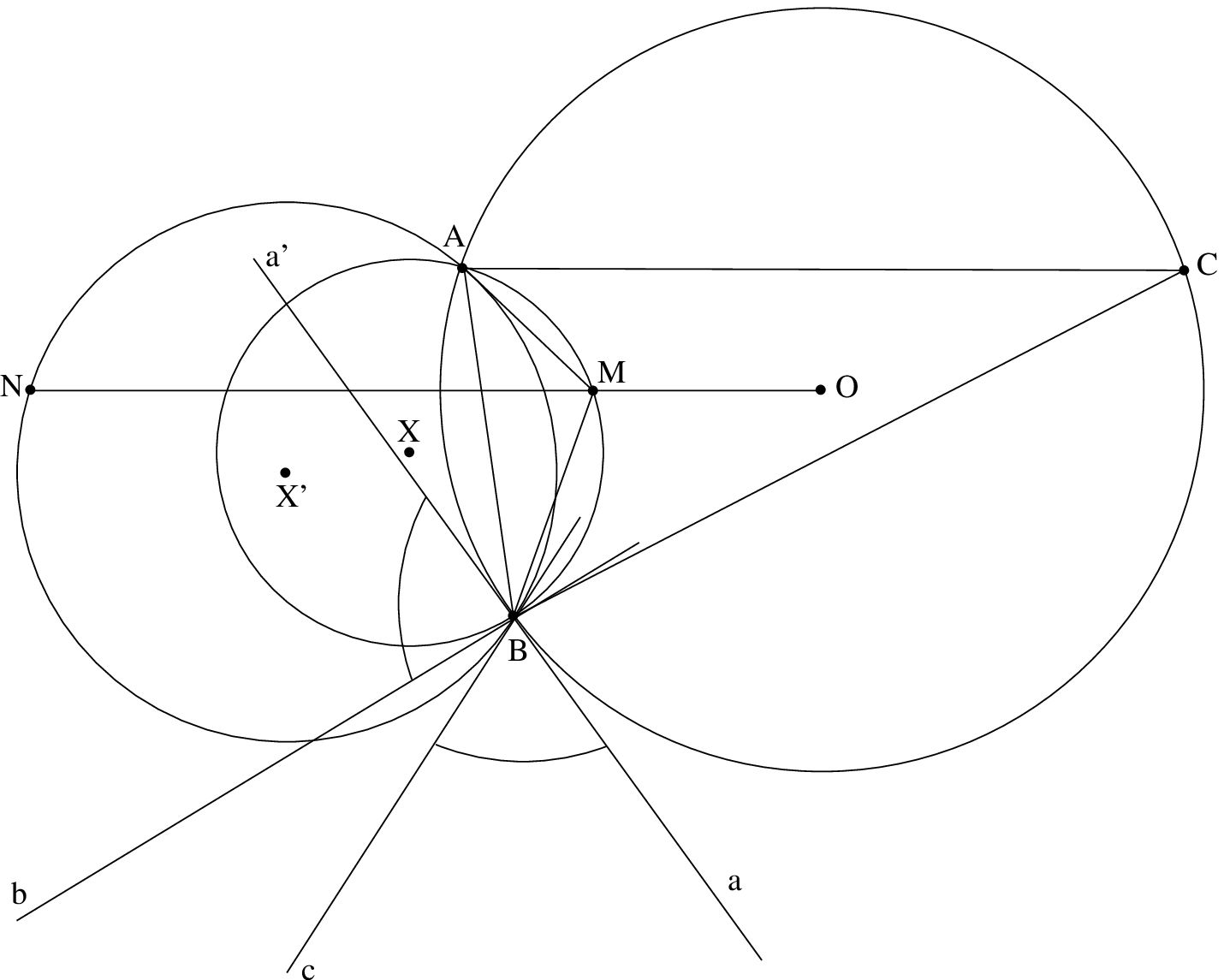}
\end{center}
\centerline{{\bf Figure~6}}
\vskip 0.10in

\begin{center}
\includegraphics[scale=1.0]{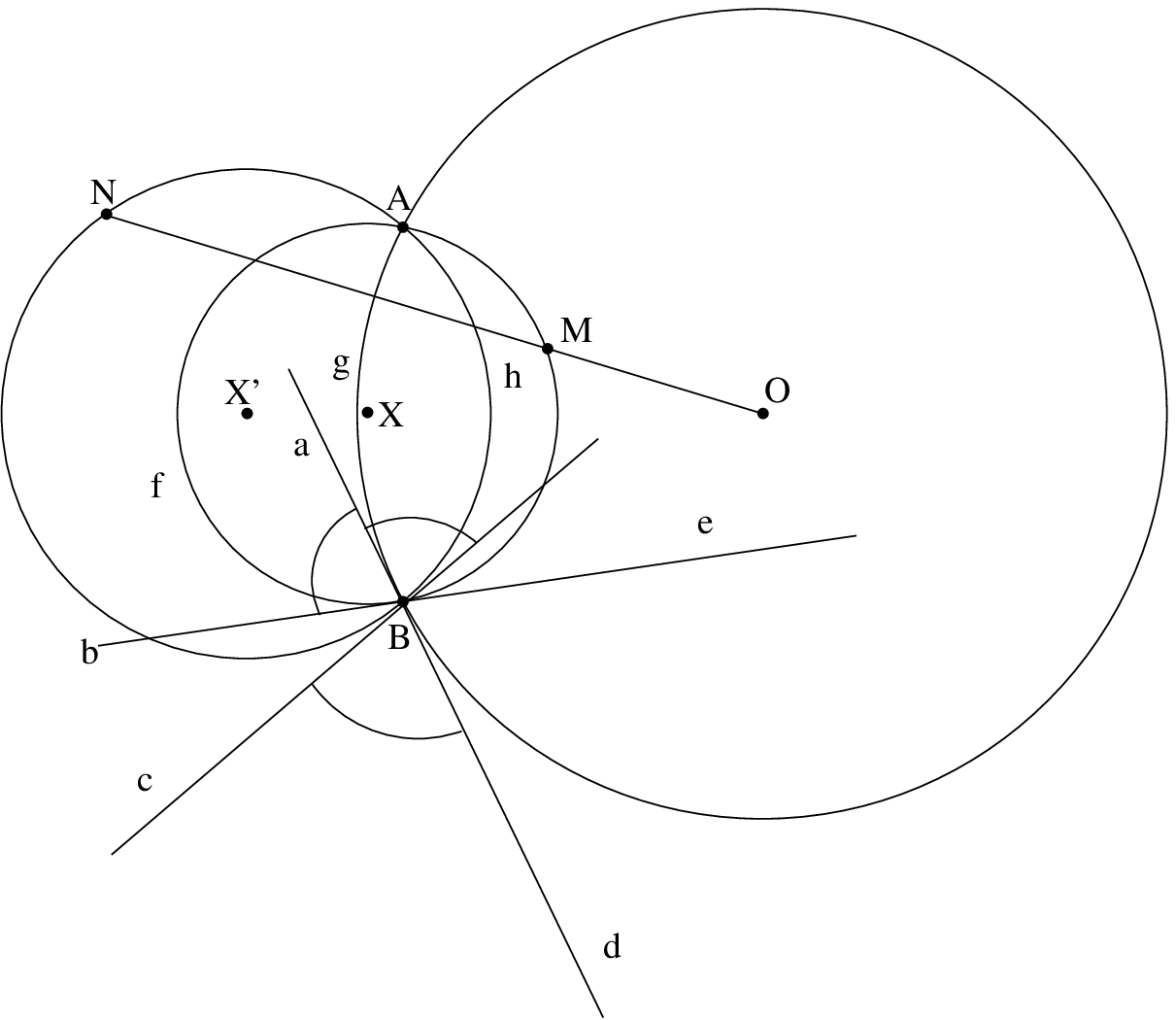}
\end{center}
\centerline{{\bf Figure~7}}
\vskip 0.15in

Consider the lines $aB$, $bB$ and $cB$, which are tangent to the circles 
$(O)$, $(X)$ and $(X')$, respectively (cf. Figure~7). As the inversion 
preserves the angle between two curves, we obtain that 
$\aangle\,((O),(X))=\aangle\,((O),(X'))$, i.e., 
$\aangle\,(bBa)=\aangle\,(cBa)$. We can write $\aangle\,bBa$ as the 
difference of $\aangle\,ABb$ and $\aangle\,ABa$.  
This difference in turn can be re-written as 
$\frac{\overset{\frown}{f}}{2}-\frac{\overset{\frown}{g}}{2}$.  
We have $\aangle\,cBd$ equal to $\aangle\,aBe$, which is further equal to 
$\aangle\,ABe+\aangle\,aBA
=\frac{\overset{\frown}{h}}{2}+\frac{\overset{\frown}{g}}{2}$.  
Using the fact that $\aangle\,bBa=\aangle\,cBd$, we obtain that 
\begin{eqnarray}\label{LT3}
\aangle\,BNA=\aangle\,BMA-2\aangle\,ACB. 
\end{eqnarray}

If $M_1M_2M_3$ and $N_1N_2N_3$ are the pedal triangles of $M$ and $N$ 
respectively, with
\begin{eqnarray}\label{M&N}
M_1,N_1\in AB, M_2,N_2\in BC, M_3,N_3\in AC, 
\end{eqnarray}
\noindent then
\begin{eqnarray}\label{FF26}
\aangle\,ACB &=& \pi-\aangle\,AN_3N_2-\aangle\,BN_2N_3\nonumber
\\[4pt]
&=& \pi-\aangle\,AN_3N_1-\aangle\,N_3-\aangle\,BN_2N_1-\aangle\,N_2.
\label{AOE}
\end{eqnarray}
\noindent In addition, the quadrilaterals $N_3NN_1A$ and $N_2NN_1B$ can 
be inscribed in a circle, therefore 
\begin{eqnarray}\label{LT1}
\aangle\,N_1NA=\aangle\,N_1N_3A \mbox{ and } 
\aangle\,N_1N_2B=\aangle\,BNN_1.
\end{eqnarray}
\noindent Furthermore, 
\begin{eqnarray}\label{LT2} 
\aangle\,BNA=\aangle\,N_1NA+\aangle\,BNN_1,\,\,\,\,\mbox{and}\quad
\aangle\,N_1=\pi-\aangle\,N_2-\aangle\,N_3, 
\end{eqnarray}
\noindent Combining (\ref{LT3}), (\ref{AOE}), (\ref{LT1}), and (\ref{LT2}), 
we arrive at the conclusion that 
\begin{eqnarray}\label{LT4}
\aangle\,N_1=\aangle\,BNA+\aangle\,ACB.
\end{eqnarray}
\noindent It is also possible to write
\begin{eqnarray}
\aangle\,BMA &=& \pi-\aangle\,MAM_1-\aangle\,MBM_1,\label{LT6}
\\[4pt]
\aangle\,ACB &=& \pi-\aangle\,CAB-\aangle\,CBA,\label{LT7}
\\[4pt]
\aangle\,CAB &=& \aangle\,MAC+\aangle\,MAM_1,\label{LT8}
\\[4pt]
\aangle\,CBA &=& \aangle\,MBC+\aangle\,MBM_1.\label{LT9}
\end{eqnarray}
\noindent Therefore, by combining (\ref{LT6})-(\ref{LT9}), we obtain
\begin{eqnarray}\label{math1}
\aangle\,BMA-\aangle\,ACB=\aangle\,MAC+\aangle\,MBC.
\end{eqnarray} 
\noindent Since the quadrilaterals $M_3AM_1M$ and $MM_1BM_2$ can be 
inscribed in a circle, 
\begin{eqnarray}\label{math2}
\aangle\,MAC=\aangle\,MM_1M_3\,\mbox{ and }\aangle\,MBC=\aangle\,MM_1M_2.  
\end{eqnarray}
\noindent However, 
\begin{eqnarray}\label{math3}
\aangle\, M_1=\aangle\,MM_1M_3+\aangle\,MM_1M_2
\end{eqnarray}
\noindent so therefore, by (\ref{math1}), (\ref{math2}), and (\ref{math3}),
\begin{eqnarray}\label{math4}
\aangle\,M_1=\aangle\,BMA-\aangle\,ACB.
\end{eqnarray}
\noindent The identities (\ref{math4}) and (\ref{LT4}) 
imply that $\aangle\,N_1=\aangle\,M_1$. A similar reasoning 
can be used to show that $\aangle\,N_2=\aangle\,M_2$, and that 
$\aangle\,N_3=\aangle\,M_3$, proving that 
$\Delta N_1N_2N_3$ and $\Delta M_1M_2M_3$ are similar, with their 
corresponding angles' vertices located on the same sides of the 
triangle $ABC$.

Next we will prove (\ref{Ar-1GT}). By Theorem~\ref{tata100}, 
\begin{eqnarray}\label{Ar-1GT2}
\frac{|\Delta M_1M_2M_3|}{|\Delta N_1N_2N_3|} 
=\frac{R^2-OM^2}{ON^2-R^2}
\end{eqnarray}
\noindent and the last fraction is $\leq 1$, as it can be seen from 
the fact that $OM\leq R\leq ON$ and $OM\cdot ON=R^2$. Hence, (\ref{Ar-1GT}) 
follows. 

Now we will prove the converse statement referred to in Theorem~\ref{tata3.1}.
In order for $\Delta N_1N_2N_3$ and $\Delta M_1M_2M_3$ to 
be similar, it is a necessary condition that 
$\aangle\,BNA=\aangle\,BMA-2\aangle\,ACB$, which is equivalent
to $\aangle\,BNA=\frac{\overset{\frown}{f}}{2}-\frac{\overset{\frown}{g}}{2}$. 
This can only occur when $\aangle\,BNA=\frac{\overset{\frown}{h}}{2}$ which 
implies that $N$ is on the circle $X'$. A similar reasoning can be applied 
to show that $N$ must be on the circles $Y'$ and $Z'$, proving that
$N$ is the inversion of $M$. This shows that the converse statement is also 
valid, completing the proof of Theorem~\ref{tata3.1}. 
\end{proof}

\noindent{\bf Remark~3.} (i) From the reasoning above we see that there exist 
precisely six points in the interior of the circumscribed circle of 
$\Delta\,ABC$ such that their antipedal triangles with respect to 
$\Delta\,ABC$ are similar to a given reference triangle $MNP$. We shall call 
these points {\it the interior points of $\Delta MNP$ with respect 
to $\Delta ABC$}.

(ii) Likewise, there exist precisely six points in the exterior of the
circumcircle of $\Delta\,ABC$ with the same property as above. 
We shall call these points {\it the exterior points of $\Delta MNP$ 
with respect to $\Delta ABC$.}

\vskip 0.08in

After this preamble, we are ready to state and prove our main result: 

\begin{theorem}\label{tata3.3}
The pedal points of the six triangles, 
$\Delta_{\alpha\beta\gamma}, 
\Delta_{\alpha\gamma\beta}, 
\Delta_{\beta\alpha\gamma}, 
\Delta_{\beta\gamma\alpha}, 
\Delta_{\gamma\alpha\beta},   
\Delta_{\gamma\beta\alpha},$ 
that are area-minimizers in the classes
${\mathcal{I}}_{\alpha\beta\gamma}, 
\,{\mathcal{I}}_{\alpha\gamma\beta}, 
\,{\mathcal{I}}_{\beta\alpha\gamma}, 
\,{\mathcal{I}}_{\beta\gamma\alpha}, 
\,{\mathcal{I}}_{\gamma\alpha\beta}, 
\,{\mathcal{I}}_{\gamma\beta\alpha}$, respectively, all lie on a circle.
\end{theorem}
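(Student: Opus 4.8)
The plan is to set up complex coordinates with the circumcircle of $\Delta ABC$ as the unit circle centered at $O$, and to reduce the asserted concyclicity to the concyclicity of six explicit complex numbers that record only the \emph{shapes} of the six minimizers.

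First I would identify the six points precisely. By Theorem~\ref{tata444} the minimizer in each class $\mathcal I_\sigma$ ($\sigma$ a permutation of $(\alpha,\beta,\gamma)$) is unique and is a pedal triangle, and by Theorem~\ref{New-Th1W} the pedal triangle of any point inside the circumcircle is area-minimizing in its own shape class; combining these with the area comparison in Theorem~\ref{tata3.1} (between a point and its inverse $OM\cdot ON=R^2$, the interior one gives the smaller pedal triangle) shows that each pedal point $P_\sigma$ lies \emph{inside} the circumcircle and is the unique interior point whose pedal triangle is directly similar to $\Delta MNP$ with labeling $\sigma$. Because the open disk is connected and a pedal triangle degenerates only when its center lies on the circumcircle, all six pedal triangles inherit the same orientation.

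Next I would compute the pedal feet: writing $a,b,c$ for the unit-modulus affixes of $A,B,C$ and $p$ for a pedal point, the feet on $BC,AC,AB$ are $D=\tfrac12(b+c+p-bc\bar p)$, $E=\tfrac12(a+c+p-ac\bar p)$, $F=\tfrac12(a+b+p-ab\bar p)$, so the shape parameter is
$$\tau:=\frac{E-D}{F-D}=\frac{(a-b)(1-c\bar p)}{(a-c)(1-b\bar p)},\qquad\text{hence}\qquad \bar p=\frac{(a-c)\tau-(a-b)}{(a-c)b\,\tau-(a-b)c}.$$
Thus $p$ is obtained from $\tau$ by a M\"obius map followed by conjugation, and both operations carry circles and lines to circles and lines. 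For the labeling with angle $\mu$ on $BC$, $\nu$ on $AC$, $\lambda$ on $AB$ one computes $\tau=\frac{\sin\lambda}{\sin\nu}e^{i\mu}$, with a single common orientation by the previous step; so I would reduce the theorem to showing that the six numbers $\frac{\sin\gamma}{\sin\beta}e^{i\alpha},\ \frac{\sin\beta}{\sin\gamma}e^{i\alpha},\ \frac{\sin\gamma}{\sin\alpha}e^{i\beta},\ \frac{\sin\alpha}{\sin\gamma}e^{i\beta},\ \frac{\sin\beta}{\sin\alpha}e^{i\gamma},\ \frac{\sin\alpha}{\sin\beta}e^{i\gamma}$ are concyclic.

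The key observation I would exploit is that these six numbers split into three pairs, grouped by the factors $e^{i\alpha},e^{i\beta},e^{i\gamma}$, the two members of each pair having reciprocal moduli — that is, they are inverse points with respect to the unit circle. A circle orthogonal to the unit circle is invariant under that inversion, so a single such circle through one representative of each pair would automatically pass through all six. Since a circle orthogonal to the unit circle through $fe^{i\theta}$ has center $w_0=u+iv$ subject only to the \emph{linear} relation $u\cos\theta+v\sin\theta=\tfrac12(f+f^{-1})$, existence of the common orthogonal circle amounts to the compatibility of the three linear equations arising from the three rays, i.e. to
$$\det\!\begin{pmatrix}\cos\alpha&\sin\alpha&g_\alpha\\ \cos\beta&\sin\beta&g_\beta\\ \cos\gamma&\sin\gamma&g_\gamma\end{pmatrix}=0,\qquad g_\alpha=\frac{\sin^2\beta+\sin^2\gamma}{2\sin\beta\sin\gamma}\ \ (\text{and cyclically}).$$
I expect this determinant identity to be the heart of the argument and the main obstacle; it is exactly where $\alpha+\beta+\gamma=\pi$ enters, via $\sin^2\beta+\sin^2\gamma=1+\cos\alpha\cos(\beta-\gamma)$ and $2\sin\beta\sin\gamma=\cos\alpha+\cos(\beta-\gamma)$, which reduce $g_\alpha$ to $\frac{1+\cos\alpha\cos(\beta-\gamma)}{\cos\alpha+\cos(\beta-\gamma)}$ and force the third row to depend on the first two. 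Granting it, the orthogonal circle exists and passes through all six $\tau$-values, and its image under the conjugate-M\"obius correspondence $\tau\mapsto p$ is the sought circle. The remaining care is to dispose of the degenerate configurations (two of $\alpha,\beta,\gamma$ equal, where a pair collapses, or the $\tau$-circle meeting the pole of the M\"obius map, where the image is a line), which I would treat as limiting or special cases.
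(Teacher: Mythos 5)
Your proposal is correct in outline and takes a genuinely different route from the paper. The paper's proof is synthetic: using the angle relation $\aangle\,D_i=\aangle\,AM_iB-\aangle\,C$ (derived as in the proof of Theorem~\ref{tata3.1}), it places the six pedal points in pairs on six circles, three through $A,B$ and three through $A,C$; an inversion centered at the vertex $A$ turns these into two pencils of lines through $B'$ and $C'$, and an angle chase shows the six inverted points form inscribable quadrilaterals, hence lie on one circle, which is then pulled back. Your proof is analytic: the composition of a M\"obius map with conjugation, $\tau\mapsto p$, transfers the problem to six explicit shape numbers, and the observation that these pair up as inverse points with respect to the unit circle --- a pairing internal to shape space, coupling the two \emph{interior} points $P_{\alpha\beta\gamma},P_{\alpha\gamma\beta}$, and not to be confused with the interior/exterior inversion of Remark~4 --- reduces everything to the consistency of three linear equations. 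Your preliminary reductions are sound: Theorems~\ref{tata444}, \ref{New-Th1W} and \ref{tata3.1} do identify the six points as the unique interior points with prescribed pedal shape and labeling, and the connectedness argument for a common orientation (needed to fix the arguments of the six $\tau$'s, since by Theorem~\ref{tata100} orientation flips across the circumcircle) is correct. What your route buys is an explicit, constructive circle with all difficulty concentrated in one identity; what it costs is coordinate computation where the paper stays elementary.

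The one step you leave unproven --- the determinant identity you ``grant'' --- is true, so the gap closes; here is a short proof. Expanding the determinant along its third column, the claim is
\[
g_\alpha\sin(\gamma-\beta)+g_\beta\sin(\alpha-\gamma)+g_\gamma\sin(\beta-\alpha)=0 .
\]
Multiply by $2\sin\alpha\sin\beta\sin\gamma$. Since $\alpha+\beta+\gamma=\pi$ gives $\sin\alpha=\sin(\beta+\gamma)$, the formula $\sin(x+y)\sin(x-y)=\sin^2x-\sin^2y$ yields $\sin\alpha\,\sin(\gamma-\beta)=\sin^2\gamma-\sin^2\beta$ and its cyclic analogues, so the left-hand side becomes
\[
(\sin^2\beta+\sin^2\gamma)(\sin^2\gamma-\sin^2\beta)
+(\sin^2\gamma+\sin^2\alpha)(\sin^2\alpha-\sin^2\gamma)
+(\sin^2\alpha+\sin^2\beta)(\sin^2\beta-\sin^2\alpha),
\]
which telescopes to $(\sin^4\gamma-\sin^4\beta)+(\sin^4\alpha-\sin^4\gamma)+(\sin^4\beta-\sin^4\alpha)=0$. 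Two residual points deserve a sentence each. First, the solution $(u,v)$ automatically gives a genuine circle: each equation forces $\sqrt{u^2+v^2}\ge u\cos\theta+v\sin\theta=\tfrac12(f+f^{-1})\ge 1$, and away from the equilateral case some $f\neq1$, so $u^2+v^2>1$ and the radius $\sqrt{u^2+v^2-1}$ is positive (in the equilateral case all six points coincide and the statement is trivial). Second, your caveat that the image of the $\tau$-circle could be a line (if that circle passes through the pole of the M\"obius map) is a defect shared by the paper: there, too, nothing excludes the circle through $M'_1,\dots,M'_6$ from passing through the inversion center $A$, in which case the six points would be collinear rather than concyclic. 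Neither argument rules out this degenerate configuration, so treating it as a special case leaves you no worse off than the published proof.
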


Proving Theorem~\ref{tata3.3} is equivalent to proving the following.
Consider $ABC$, triangle of reference, and $\Delta\,D_0E_0F_0$ a fixed 
fundamental triangle (in the terminology of Definition~\ref{Def-Fun}).

Let $M_i$, $i=1,2,...,6,$ be six points such that, 
for each $i\in \{1,2,...,6,\}$, the pedal triangle of $M_i$ with respect 
to $\Delta\,ABC$ is $\Delta\,D_iE_iF_i$ with
\begin{eqnarray}\label{CASA2}
\aangle\,{D_i}=\aangle\,{D_0},\quad
\aangle\,{E_i}=\aangle\,{E_0},\quad
\aangle\,{F_i}=\aangle\,{F_0},
\end{eqnarray}
\noindent and
\begin{eqnarray}\label{CASA3}
&& D_1,D_2,E_3,E_4,F_5,F_6\in AB,\nonumber
\\[4pt]
&& D_3,D_5,E_1,E_6,F_2,F_4\in BC,\nonumber
\\[4pt]
&& D_4,D_6,E_2,E_5,F_1,F_3\in AC,
\end{eqnarray}
\noindent (see Figure~8). Then the points $M_i$, $i=1,2,...,6,$ 
lie on the same circle.

\begin{center}
\includegraphics[scale=1.0]{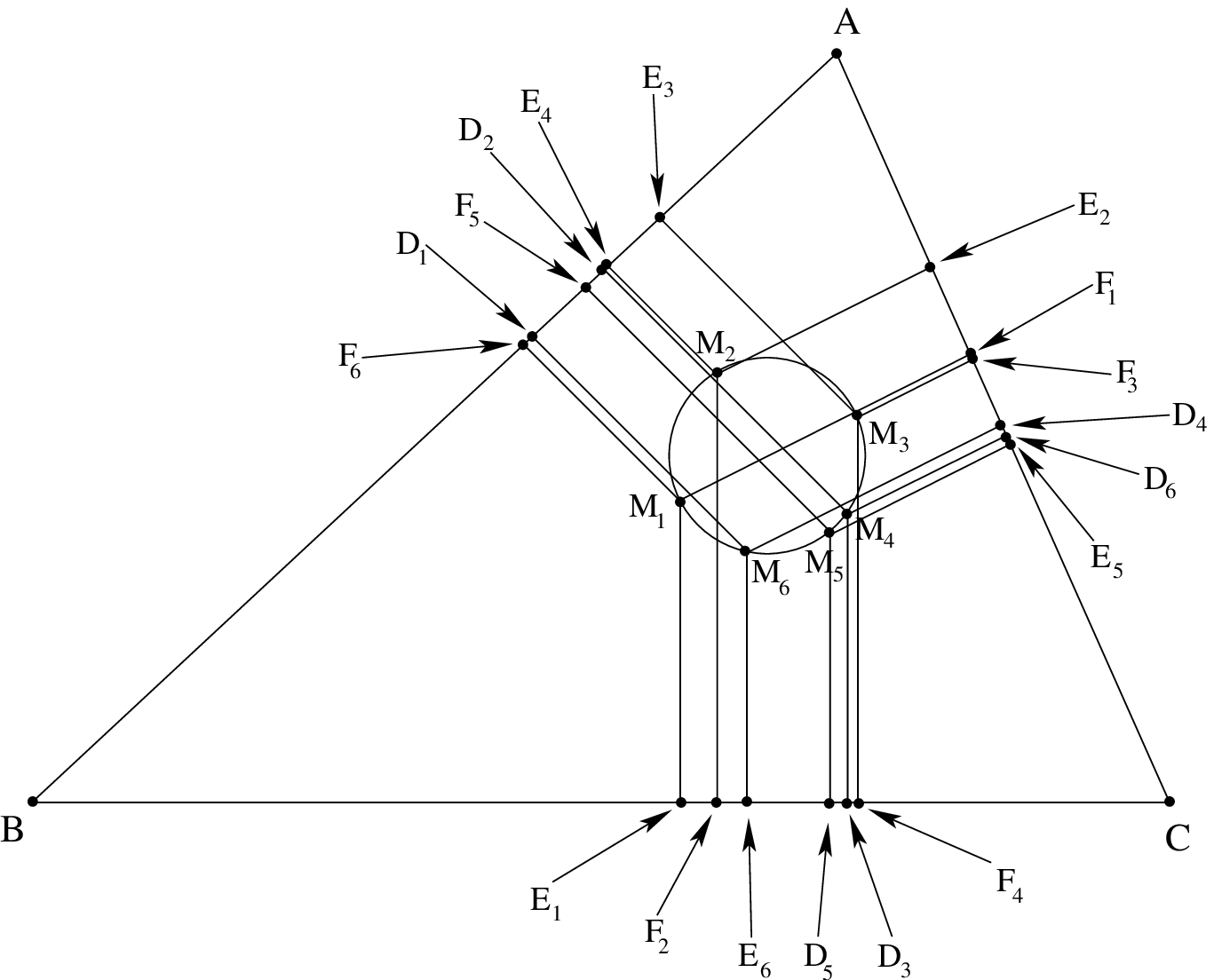}
\end{center}
\centerline{{\bf Figure~8}}
\vskip 0.15in

\begin{proof} Since $\Delta\,D_1E_1F_1$ is the antipedal triangle 
of $M_1$, with respect to $\Delta\,ABC$, we can obtain 
(reasoning similarly to what was done in the proof of Theorem~\ref{tata3.1} 
to show that $\aangle\,M_1=\aangle\,BMA-\aangle\,ACB$) that 
\begin{eqnarray}\label{NbW-3}
\aangle\,{E_1}=\aangle\,{BM_1C}-\aangle\,{A},\quad
\aangle\,{D_1}=\aangle\,{AMB}-\aangle\,{C},\quad
\aangle\,{F_1}=\aangle\,{AM_1C}-\aangle\,{B}.
\end{eqnarray}
\noindent Similarly we obtain $\aangle\,{D_2}=\aangle\,{BM_2A}-\aangle\,{C}$. 
From $\aangle\,{D_0}=\aangle\,{D_1}=\aangle\,{D_2}$, it follows that 
$\aangle\,{BM_1A}=\aangle\,{BM_2A}$, i.e., 
\begin{eqnarray}\label{NbW-5.2}
\begin{array}{l}
\mbox{$M_1,M_2$ lie on a circle $(O_D)$, passing through $A$ and $B$} 
\\[6pt]
\mbox{and with the property that ${\textstyle{\frac{1}{2}}}\overset{\frown}{AB}
=\aangle\,{AM_1B}=\aangle\,C+\aangle\,D_0$.}
\end{array}
\end{eqnarray}
\noindent Likewise (see Figure~9), 
\begin{eqnarray}\label{NbW-5.3}
\begin{array}{l}
\mbox{$M_3,M_4$ lie on a circle $(O_E)$, passing through $A$ and $B$}
\\[4pt]
\mbox{and with the property that 
$\frac{1}{2}\overset{\frown}{AB}=\aangle\,{C}+\aangle\,{E_0}$}, 
\end{array}
\\[8pt]
\begin{array}{l}
\mbox{$M_5,M_6$ lie on a circle $(O_F)$, passing through $A$ and $B$} 
\\[4pt]
\mbox{and with the property that $\frac{1}{2}\overset{\frown}{AB}
=\aangle\,{C}+\aangle\,{F_0}$}, 
\end{array}
\label{NbW-5.4}
\\[8pt]
\begin{array}{l}
\mbox{$M_6,M_4$ lie on a circle $(O'_D)$, passing through $A$ and $C$}
\\[4pt]
\mbox{and with the property that $\frac{1}{2}\overset{\frown}{AC}
=\aangle\,{B}+\aangle\,{D_0}$}, 
\end{array}
\label{NbW-5.5}
\\[8pt]
\begin{array}{l}
\mbox{$M_5,M_2$ lie on a circle $(O'_E)$, passing through $A$ and $C$}
\\[4pt]
\mbox{and with the property that $\frac{1}{2}\overset{\frown}{AC}
=\aangle\,{B}+\aangle\,{E_0}$}, 
\end{array}
\label{NbW-5.6}
\\[8pt]
\begin{array}{l}
\mbox{$M_1,M_3$ lie on a circle $(O'_F)$, passing through $A$ and $C$}
\\[4pt]
\mbox{and with the property that $\frac{1}{2}\overset{\frown}{AC}
=\aangle\,{B}+\aangle\,{F_0}$}. 
\end{array}
\label{NbW-5.7}
\end{eqnarray}

The key step in the proof is to make at this stage an inversion of center 
$A$ and arbitrary modulus. Since the lines $AB$ and $AC$ pass through $A$, 
they will remain unchanged. The points $B$, $C$ become $B'$ and $C'$, 
respectively. The circles $(O_D)$, $(O_E)$, $(O_F)$ will be transformed 
into the lines $d_D$, $d_E$ and $d_F$ respectively, which are concurrent 
at $B'$. Meanwhile, the circles $(O'_D)$, $(O'_E)$ and $(O'_F)$ become 
the lines $d'_D$, $d'_E$ and $d'_F$, respectively, which are concurrent
at $C'$. The points $M_1,\,M_2,\,M_3,\,M_4,\,M_5,\,M_6$ will be transformed 
into $M'_1,\,M'_2,\,M'_3,\,M'_4,\,M'_5$ and $M'_6$,respectively, and 
$M'_1,M'_2\in d_D$, $M'_3,M'_4\in d_E$, $M'_5,M'_6\in d_F$, 
$M'_6,M'_4\in d'_D$, $M'_5,M'_2\in d'_E$, $M'_1,M'_3\in d'_F$.

\begin{center}
\includegraphics[scale=1.0]{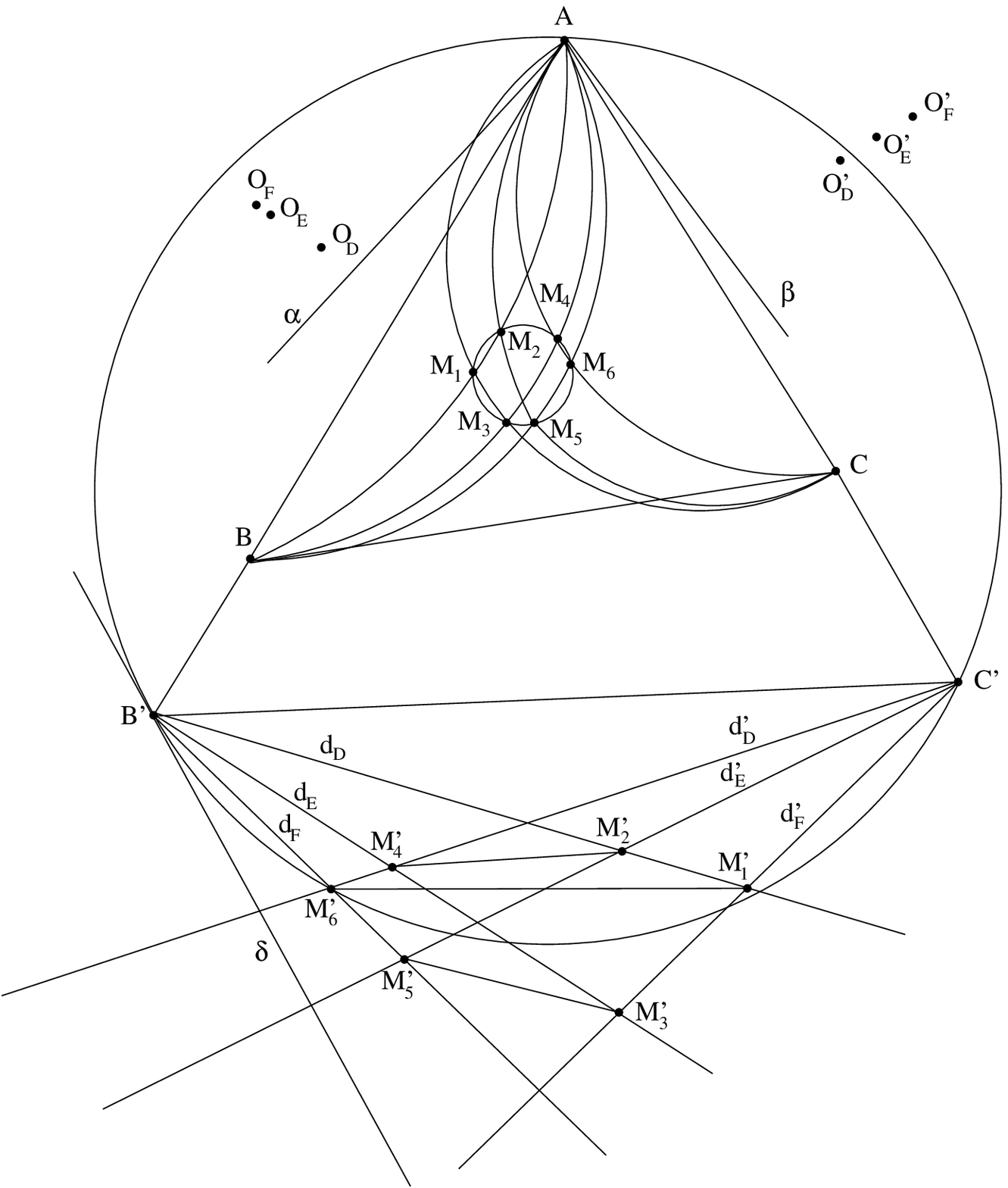}
\end{center}
\centerline{{\bf Figure~9}}
\vskip 0.15in

The goal is to show that the points $M'_1,\,M'_2,\,M'_3,\,M'_4,\,M'_5,\,M'_6$
lie on the same circle. We note that since under 
an inversion the angle between two curves is preserved, we have 
$\aangle\,(d_D,d'_F)=\aangle\,((O_D),(O'_F))$, as the angle 
between two circles is defined to be the angle between the tangents 
at one of their common points. Therefore, 
$\aangle\,(d_D,d'_F)=\aangle\,(\alpha A\beta)=\pi-\aangle\,(O_DAO'_F)$. However,
\begin{eqnarray}\label{ManU4}
\aangle\,(O_DAO'_F)=\aangle\,(O_DAB)+\aangle\,A+\aangle\,(CAO'_F).
\end{eqnarray}
\noindent On the other hand $\aangle\,(O_DAB)=\frac{\pi-\aangle\,(AO_DB)}{2}
=\aangle\,{C}+\aangle\,{D_0}-\frac{\pi}{2}$ and 
$\aangle\,(CAO'_F)=\aangle\,{F_0}+\aangle\,{B}-\frac{\pi}{2}$. Hence,
\begin{eqnarray}\label{Chelsea-1}
\aangle\,(O_DAO'_F)=(\aangle\,{A}+\aangle\,{B}+\aangle\,{C})
+\aangle\,{D_0}+\aangle\,{F_0}-\pi=\aangle\,{D_0}+\aangle\,{F_0},
\end{eqnarray}
\noindent and $\aangle\,(d_D,d'_F)=\pi -\aangle\,(O_DAO'_F)=\aangle\,{E_0}$.

Let $B'{\delta}$ be the tangent at $B$ to the circle determined by the points
$A'$, $B'$, $C'$. Then 
$\aangle\,(d_E,B'C')=\aangle\,({\delta}B'C')-\aangle\,({\delta}B',d_E)$.
However, $\aangle\,({\delta}B'C')=\aangle\, A$, and 
$\aangle\,({\delta}B',d_E)=\aangle\,(C(A,B',C'),d_E)=\aangle\,(BC,(O_E))
=-\aangle\, B+(\pi -\aangle\,{C}-\aangle\,{E_0})$, where $C(A,B,C)$ denotes
the circle containing $A,B,C$. Therefore
\begin{eqnarray}\label{Chelsea-2}
\aangle\,(d_E,B'C')=\aangle\,{E_0}.
\end{eqnarray}
\noindent We have the pairs of lines $(d_D,d'_D)$, $(d_E,d'_E)$, $(d_F,d'_F)$ 
having the same slope as the line $BC$, and by symmetry, we see that 
$M'_4M'_2\parallel {M'_6M'_1}\parallel {M'_5M'_3}\parallel {B'C'}$. 
Hence, $\aangle\,(M'_4M'_2B')=\aangle\, (M'_2B'C')$ and 
$\aangle\,(B'M'_3C')=\pi-\aangle\,(M'_3B'C')-\aangle\,(M'_3C'B')
=\pi -\aangle\,{E_0}-\aangle\,{F_0}=\aangle\,{D_0}$. 
As a consequence, the quadrilateral $M'_4M'_2M'_1M'_3$ is inscribable. 
Since $\aangle\,(B'M'_5C')=\aangle\,(B'M'_3C')=\aangle\,{D_0}$, 
it follows that the quadrilateral $M'_4M'_2M'_1M'_6$ is also inscribable. 
The quadrilateral $M'_4M'_2M'_1M'_6$ is an isosceles trapezoid, hence it 
is inscribable as well. As a result, $M'_6$ lies on the circumscribed circle 
to the quadrilateral $M'_3M'_1M'_2M'_4$. Since $M'_5$ lies on the 
circumscribed circle to the quadrilateral $M'_6M'_1M'_3M'_5$, it follows 
that all the points $M_i$, $i=1,2,...,6$, lie on the same circle. 
\end{proof}

\noindent{\bf Remark~4.} Since the interior points lie on a circle, 
it follows that the exterior points lie on a circle (the transformed 
under the inversion of center $O$ and modulus $R$ of the former circle).

\vskip 0.08in
\noindent --------------------------------------
\vskip 0.10in

\noindent California Institute of Technology

\noindent MSC 700, Pasadena, CA 91126, USA

\noindent {\tt e-mail}: {\it amitrea\@@caltech.edu}

\end{document}